\title{Sum-free sets which are closed under multiplicative inverses}
\author{Katherine Benjamin}
\address{Mathematical Institute, University of Oxford, Woodstock Road,
Oxford OX2 6GG, United Kingdom}
\email{katherine.benjamin@stcatz.ox.ac.uk}
\newif\ifdetails
\pgfplotsset{compat=1.12}
\def\F{\mathbb{F}}
\def\R{\mathbb{R}}
\def\C{\mathbb{C}}
\def\E{\mathbb{E}}
\def\eps{\varepsilon}
\def\from{\colon}
\newcommand{\wh}{\widehat}
\DeclareMathOperator{\Tr}{Tr}
\renewcommand{\Re}{\operatorname{Re}}
\DeclarePairedDelimiter\abs{\lvert}{\rvert}
\newcommand{\case}[1]{\medskip \paragraph{\textbf{#1}}}
\newtheorem{theorem}{Theorem}[section]
\newtheorem{lemma}[theorem]{Lemma}
\newtheorem{corollary}[theorem]{Corollary}
\newtheorem{proposition}[theorem]{Proposition}
\newtheorem{conjecture}[theorem]{Conjecture}
\theoremstyle{remark}
\newtheorem*{remarks}{Remarks}
\begin{document}

\begin{abstract}
    Let $A$ be a subset of a finite field $\F$. When $\F$ has prime order, we
    show that there is an absolute constant $c > 0$ such that, if $A$ is both
    sum-free and equal to the set of its multiplicative inverses, then
    $\abs{A} <  (0.25 - c)\abs{\F} + o(\abs{\F})$ as $\abs{\F} \rightarrow
    \infty$. We contrast this with the result that such sets exist with size at
    least $0.25\abs{\F} - o(\abs{\F})$ when $\F$ has characteristic $2$.
\end{abstract}

\maketitle

\section{Introduction}\label{sec:introduction}

Let $A$ be a subset of a finite field $\F$. We say $A$ is \emph{sum-free} if
$A \cap (A + A) = \varnothing$, where \[
    A + A \coloneqq \{a + b : a, b \in A\}
.\] 
We say $A$ is \emph{closed under (multiplicative) inverses} if $0 \not\in A$ and
$A = A^{-1}$, where \[
    A^{-1} \coloneqq \{ a^{-1} : a \in A \}
.\] 
In this paper, we study sets which are both sum-free and closed under inverses.

When $\F$ has prime order, a simple application of the Cauchy-Davenport 
inequality (see e.g.\ \cite[Theorem~5.4]{tv2006}) shows that 
$\abs{A} \leq  (\abs{\F}+1) / 3$ when $A$ is sum-free. Lev showed in
\cite{lev2006} that when $\abs{A}$ is close to $\abs{\F} / 3$, $A$ is similar
in structure to an arithmetic progression, and therefore unlikely to be closed
under inverses. So, we might expect $\abs{A}$ to be smaller than 
$\abs{\F} / 3$ if $A$ is also closed under inverses. 

In this direction, Bienvenu et al.\ showed in 
\cite[Corollary~5.1]{bhs2019} that $\abs{A} < 0.3051\abs{\F} + o(\abs{\F})$ as 
$\abs{\F} \rightarrow \infty$. We offer the following improvement on this:

\begin{theorem}\label{thm:bound}
    There is an absolute constant $c > 0$ so that if $\F$ is a field of prime
    order and $A \subseteq \F^{*}$ is sum-free and closed under inverses then
    $\abs{A} < (0.25-c)\abs{\F} + o(\abs{\F})$ as $\abs{\F} \rightarrow \infty$.
\end{theorem}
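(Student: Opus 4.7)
The plan is Fourier-analytic. Set $f = 1_A$, $\alpha = |A|/p$, and $\widehat{f}(r) = \sum_x f(x) e_p(-rx)$ with $e_p(t) = e^{2\pi i t / p}$. The sum-free hypothesis unwinds as $\sum_x f(x)(f*f)(x) = 0$, and Plancherel together with isolation of the $r=0$ term yields the identity
\begin{equation*}
\alpha^3 p^3 \;=\; -\sum_{r \neq 0} \operatorname{Re}\widehat{f}(r)\,\lvert\widehat{f}(r)\rvert^2. \qquad (\ast)
\end{equation*}
Estimating the right-hand side trivially by $(\max_{r\neq 0}\lvert\widehat{f}(r)\rvert)\cdot\sum_{r\neq 0}\lvert\widehat{f}(r)\rvert^2$ only yields the sort of bound from \cite{bhs2019}, so the improvement past $1/4$ must come from an additional structural input.

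The extra input is that, because $A \cap (A+A) = \varnothing$, we have the sumset bound $\lvert A + A\rvert \le p - \lvert A\rvert$. For $\alpha$ close to $1/4$ this forces $\lvert A+A\rvert/\lvert A\rvert \le 3 + O(1)$, so by Freiman's theorem in $\Z/p\Z$ (via Green--Ruzsa) $A$ must lie in a generalised arithmetic progression of bounded rank and cardinality $O(\lvert A\rvert)$. The plan is to combine this structural information with $A = A^{-1}$, which Fourier-analytically takes the form $\widehat{f}(r) = \sum_{a \in A} e_p(-r a^{-1})$. Multiplying this against the additive representation of $\widehat{f}(r)$ gives the hybrid identity
\begin{equation*}
\widehat{f}(r)^2 \;=\; \sum_{a, b \in A} e_p\bigl(-r(a + b^{-1})\bigr),
\end{equation*}
which is the kind of additive--multiplicative character sum governed by Weil's bound for Kloosterman sums. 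One should then feed this cancellation back into $(\ast)$, showing that an inverse-symmetric $A$ cannot simultaneously have small doubling and support large real Fourier mass at nonzero frequencies.

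The main difficulty will be making the last step effective when the ambient progression has length a constant fraction of $p$. For an arithmetic progression $P \subseteq \F_p$ of length $\asymp p$, no sharp bound on $\lvert P \cap P^{-1}\rvert$ is known --- standard sum-product estimates require $\lvert P\rvert \le p^{1-\delta}$ --- so the Weil cancellation in the hybrid sum cannot be invoked directly on the gAP produced by Freiman--Ruzsa. I expect the proof must work along two tracks: averaging the inverse-symmetric Kloosterman representation over dilates or restricting to a sub-progression on which Weil is applicable, and simultaneously refining $(\ast)$ so that the dominant contribution comes from frequencies $r$ for which the hybrid sum can be controlled. Balancing these two ingredients so as to produce any positive absolute constant $c > 0$ --- even a numerically small one --- is, I anticipate, the technical heart of the argument.
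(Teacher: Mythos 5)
Your starting point --- the convolution identity $\alpha^3 p^3 = -\sum_{r\neq 0}\Re\widehat{f}(r)\,|\widehat{f}(r)|^2$ --- matches Proposition~\ref{prop:sumofcubes}, and you correctly observe that estimating it trivially only recovers the earlier bound. But the structural route you then pursue diverges from the paper and, more importantly, stalls at exactly the point you flag. At density $\alpha \approx 1/4$ the Freiman/Green--Ruzsa conclusion is that $A$ lies in a gAP of bounded rank and size $\Theta(p)$, and for a dense progression $P$ of length $\asymp p$ there is no Weil-type or sum-product mechanism controlling $P \cap P^{-1}$: indeed a long interval around $0$ already has $|P\cap P^{-1}|=\Theta(p)$. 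Your hybrid sum $\widehat{f}(r)^2 = \sum_{a,b\in A}e_p\bigl(-r(a+b^{-1})\bigr)$ is an \emph{incomplete} Kloosterman sum, ranging over $A$ rather than all of $\F_p^*$, and completing it costs a factor that swallows any gain at these densities. So the two-track repair you gesture at is not a technicality to be balanced; as set up, the Freiman route never gives you access to Weil.

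The paper never invokes a structure theorem. Its key device (Proposition~\ref{prop:selfinversebound}) is Bombieri's almost-orthogonality inequality (Lemma~\ref{lemma:almostorthogonal}) applied to $1_A$ against the family $\{e_{s_i}(x)\}$ \emph{together with} $\{e_{s_i}(x^{-1})\}$. Weil's bound (Lemma~\ref{lemma:kloosterman}) enters only to certify that these two families are nearly orthogonal to one another --- this is a \emph{complete} Kloosterman sum over $\F_p^*$, so Weil applies cleanly --- while $A=A^{-1}$ forces $\langle 1_A, e_{s_i}(\cdot^{-1})\rangle$ to have the same modulus as $\widehat{1_A}(s_i)$. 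The resulting bound $\alpha \le \bigl(1+4\sum_{i\le k}\delta_i^2\bigr)^{-1} + O(k/\sqrt p)$ is a Bessel inequality with its usual strength doubled. The other ingredient you are missing is a sharpened Yudin-type estimate (Lemma~\ref{lemma:positivecoeff}) that manufactures a second large Fourier coefficient at $2r$ from a large one at $r$. Combining this with Corollary~\ref{cor:sumdeltacubes} (which squeezes a lower bound on $\sum\delta_i^3$ out of the convolution identity plus a Parseval tail estimate), the paper runs a case analysis on $\delta_1$ and $\theta_1$ to force $\sum_{i\le k}\delta_i^2 \ge 3/4 + h$ for a fixed $h>0$ in every case, which feeds back into the self-inverse bound to give $\alpha < 1/4 - c$.
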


This is in contrast to fields of characteristic $2$, where we show:

\begin{proposition}\label{prop:char2}
    If $\F$ is a field of characteristic $2$ then there exists
    $A \subseteq \F^*$ which is both sum-free and closed under inverses, such
    that $\abs{A} = 0.25\abs{\F} + o(\abs{\F})$ as $\abs{\F} \rightarrow \infty$.
\end{proposition}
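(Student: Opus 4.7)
The plan is to take a standard linear sum-free set in $\F$ and restrict it to its inverse-closed portion. Write $q = \abs{\F} = 2^n$ and let $T \colon \F \to \F_2$ denote the absolute trace. The affine hyperplane $H = \{x \in \F : T(x) = 1\}$ is sum-free: for $a, b \in H$ we have $T(a+b) = T(a) + T(b) = 0$, so $a + b \notin H$. I then take
\[
    A = H \cap H^{-1} = \{x \in \F^* : T(x) = T(x^{-1}) = 1\}.
\]
By construction, $A$ is sum-free (as a subset of $H$) and satisfies $A = A^{-1}$, so the only remaining task is to show $\abs{A} = q/4 + o(q)$.

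To estimate $\abs{A}$, I would use the canonical additive character $\chi(x) = (-1)^{T(x)}$, so that $\mathbf{1}[T(x) = 1] = (1 - \chi(x))/2$. Expanding the product and using the standard identities $\sum_{x \in \F^*} \chi(x) = \sum_{x \in \F^*} \chi(x^{-1}) = -1$ (the second by the substitution $y = x^{-1}$) gives
\[
    \abs{A} = \frac{1}{4} \sum_{x \in \F^*} (1 - \chi(x))(1 - \chi(x^{-1})) = \frac{q + 1 + K}{4},
\]
where $K = \sum_{x \in \F^*} \chi(x + x^{-1})$ is the Kloosterman sum with parameters $(1,1)$ over $\F$. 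I would then invoke the classical Weil bound $\abs{K} \leq 2\sqrt{q}$ to conclude that $\abs{A} = q/4 + O(\sqrt{q}) = \abs{\F}/4 + o(\abs{\F})$.

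This approach presents no serious obstacle: the construction is the natural characteristic-two analogue of picking a large sum-free set and symmetrising under $x \mapsto x^{-1}$, the sum-free and inverse-closed properties are immediate by design, and the asymptotic cleanly reduces to the Weil bound on Kloosterman sums, which I would cite from a standard reference rather than reprove. The only mild warning is that the identification $\chi(x)\chi(x^{-1}) = \chi(x + x^{-1})$ must be justified from $\chi$ being an additive character, but this is routine.
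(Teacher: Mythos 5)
Your proof is correct and is essentially identical to the paper's: both take the affine hyperplane $\{x : \Tr(x) = 1\}$, which is the complement of the kernel of the additive character $(-1)^{\Tr(\cdot)}$, intersect it with its set of inverses, and estimate the size via the Weil bound for Kloosterman sums in characteristic~$2$. The only difference is cosmetic — you carry the exact count $\abs{A} = (q + 1 + K)/4$ rather than working with densities and an $o(1)$ error — so there is nothing further to reconcile.
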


Write $\mu(\F)$ for the density $\abs{A}/\abs{\F}$ of the largest
$A \subseteq \F$ which is both sum-free and closed under inverses.
Theorem \ref{thm:bound} says that $\mu(\F_p) \leq 0.25 - c + o(1)$,
whereas Proposition \ref{prop:char2} says that
$\mu(\F_{2^{n}}) \geq 0.25 - o(1)$. So we can deduce that:

\begin{corollary}
    The limit $\lim_{\abs{\F} \rightarrow \infty}{ \mu (\F)}$ does not exist.
\end{corollary}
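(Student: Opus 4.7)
The plan is to establish non-existence of the limit by exhibiting two subsequences of finite fields, each with cardinality tending to infinity, along which the density $\mu$ tends to distinct values. The natural candidates are the sequence $(\F_p)$ of prime-order fields as $p \to \infty$, and the sequence $(\F_{2^n})$ of fields of characteristic two as $n \to \infty$. Both satisfy $\abs{\F} \to \infty$, so if the limit $\lim_{\abs{\F} \to \infty} \mu(\F)$ existed it would coincide with the limit along each of these subsequences.

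For the first subsequence, Theorem \ref{thm:bound} gives $\mu(\F_p) \leq 0.25 - c + o(1)$ as $p \to \infty$, so $\limsup_{p \to \infty} \mu(\F_p) \leq 0.25 - c$. For the second, Proposition \ref{prop:char2} produces sum-free inverse-closed sets of density $0.25 - o(1)$ inside $\F_{2^n}$, giving $\mu(\F_{2^n}) \geq 0.25 - o(1)$ and hence $\liminf_{n \to \infty} \mu(\F_{2^n}) \geq 0.25$. Since $c > 0$ these two bounds are incompatible, so no common limit can exist.

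The only subtlety, and the sole place where care is needed, is making sure the notion of limit over all finite fields is interpreted correctly: one should phrase the argument as ``if the limit existed with value $L$, then every subsequence would have limit $L$,'' and then derive the contradiction $0.25 \leq L \leq 0.25 - c$. There is no real technical obstacle; once Theorem \ref{thm:bound} and Proposition \ref{prop:char2} are granted, the corollary is a one-line deduction.
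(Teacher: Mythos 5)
Your proposal is exactly the paper's argument: compare the subsequences $(\F_p)$ and $(\F_{2^n})$, using Theorem \ref{thm:bound} to bound $\mu(\F_p)$ strictly below $0.25$ and Proposition \ref{prop:char2} to bound $\mu(\F_{2^n})$ from below by $0.25 - o(1)$, and conclude the limit over all finite fields cannot exist. The reasoning is correct and matches the paper.
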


The rest of the paper is structured as follows. In Section \ref{sec:definitions}
we recall some basic definitions of Fourier analysis, and establish some
notation. In Section \ref{sec:prime} we consider fields of prime order. We
establish some Fourier analytic results and use them to prove Theorem
\ref{thm:bound}. Then, in Section \ref{sec:char2} we consider fields of even
characteristic, and prove Proposition \ref{prop:char2}. In Section
\ref{sec:remarks} we make some final remarks.

\section{Notation and definitions from Fourier analysis} \label{sec:definitions}

Let $\F$ be a finite field. We recall some basic definitions from Fourier
analysis (see e.g.\ \cite[Section~4]{tv2006} or \cite[Section 1.1]{wolf2015}). 

If $X \subseteq \F$ is non-empty and $f \from X \to \C$ is any function, we
define the \emph{mean} \[
    \underset{x \in X}{\E}[f(x)] \coloneqq \frac{1}{\abs{X}}\sum_{x \in X}f(x)
.\] 
We will also write \[
\E[f] = \underset{x}{\E}[f(x)] = \underset{x \in \F}{\E}[f(x)]
\]
when it is unambiguous to do so. We denote by $1_X$ the indicator function
\[
    1_X(x) \coloneqq 
        \begin{cases}
            1 & \text{if } x \in X, \\
            0 & \text{otherwise.}
        \end{cases}
\]

When $\F$ has prime order $p$ we can view the set of functions $\F \to \C$ as a
Hilbert space by equipping it with the inner product \[
    \langle f, g \rangle \coloneqq \E[f \overline{g}]
.\] 
Write $e(\theta) = \exp(i \theta)$ for the exponential map $\R \to \C$. For each
$r \in \F$, define the \emph{character}\footnote{
    We follow the notation of \cite{tv2006}. It is also common to write
    $e_p(x) = e(2 \pi x / p)$.
} $e_r \from \F \to \C$ by $e_r(x) \coloneqq e(2 \pi rx / p)$.
The characters enjoy the following orthogonality property: \[
    \langle e_r, e_s \rangle =
        \begin{cases}
            1 & \text{if $r = s$} \\
            0 & \text{otherwise.}
        \end{cases}
\]
This motivates the definition of the \emph{Fourier coefficient of $f$ at $r$} as
\[
    \wh{f}(r) \coloneqq \langle f, e_r \rangle 
.\] 
\emph{Parseval's identity} is then
\begin{equation*}\label{eq:parseval}
    \E[\abs{f}^2] = \sum_{r \in \F}\bigl|\wh{f}(r)\bigr|^{2}.
\end{equation*}

\section{Fields of prime order} \label{sec:prime}

The goal of this section is the prove Theorem \ref{thm:bound}. Let $\F = \F_p$
be a field of prime order $p>2$. Let $A$ be a subset of $\F^*$, not necessarily
sum-free or closed under inverses, with density $\alpha = \abs{A} / p$. We fix
some $0 < \alpha_0 < 0.25$ and assume $\alpha \geq \alpha_0$, since otherwise
Theorem \ref{thm:bound} is immediate.

Order the elements $r_1, \dots, r_{(p-1) / 2}$ of the interval
$\{1, \dots, (p-1) / 2\} \subseteq \F$ so that
$\delta_1 \geq  \dots \geq \delta_{(p-1) / 2}$, where
$\bigl|\wh{1_A}(r_i)\bigr| = \delta_i\alpha$. Note that \[ 
    \F^{*} = \{r_1, \dots, r_{(p-1) / 2} \} \cup \{-r_1, \dots, -r_{(p-1) / 2}\}
\] and that $\wh{1_A}(-r_i) = \overline{\wh{1_A}(r_i)}$ for each $i$. We will
also write $\theta_1 \in [0, 2\pi)$ for the argument of $\wh{1_A}(r_1)$, so that
$\wh{1_A}(r_1) = (\delta_1\alpha) e(\theta_1)$ and
$\wh{1_A}(r_1) + \wh{1_A}(-r_1) = 2\delta_1\alpha\cos\theta_1$.

\subsection{Properties of sum-free sets}

We begin by recalling a standard identity, which can be derived by considering
the convolution $1_A * 1_A$ (see e.g.\ \cite[p.~153]{tv2006}).

\begin{proposition} \label{prop:sumofcubes}
    If $A$ is sum-free then
    \begin{equation*}
        \alpha^{3} + \sum_{r\neq 0} \bigl|\wh{1_A}(r)\bigr|^{2} \wh{1_A}(r) = 0.
    \end{equation*}
\end{proposition}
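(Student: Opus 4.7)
The plan is to extract the identity from the fact that $A$ sum-free means the equation $a+b=c$ has no solutions in $A\times A\times A$, then Fourier-expand the indicator of this condition.

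First I would observe that sum-freeness of $A$ is exactly the statement that, for every $x\in\F$, $1_A(x)\cdot(1_A\ast 1_A)(x)=0$, where the convolution is defined (using the averaged convention, so that $\widehat{f\ast g}=\wh f\wh g$) by $(1_A\ast 1_A)(x)=\E_y[1_A(y)1_A(x-y)]$. Indeed, $(1_A\ast 1_A)(x)$ is a nonnegative multiple of the number of representations $x=a+b$ with $a,b\in A$, so the product with $1_A(x)$ counts (up to $1/p$) the triples $(a,b,c)\in A^3$ with $a+b=c$, which vanish by hypothesis. Taking expectations in $x$ therefore gives $\E_x[1_A(x)\,(1_A\ast 1_A)(x)]=0$.

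Next I would apply Parseval's identity to the two functions $1_A$ and $1_A\ast 1_A$, both of which are real-valued. This yields
\begin{equation*}
0 \;=\; \E_x\bigl[1_A(x)\,(1_A\ast 1_A)(x)\bigr] \;=\; \sum_{r\in\F}\wh{1_A}(r)\,\overline{\widehat{1_A\ast 1_A}(r)} \;=\; \sum_{r\in\F}\wh{1_A}(r)\,\overline{\wh{1_A}(r)^{2}}.
\end{equation*}
Since $1_A$ is real we have $\overline{\wh{1_A}(r)}=\wh{1_A}(-r)$, and re-indexing the sum via $r\mapsto-r$ (or, equivalently, observing that the whole sum is real and taking complex conjugates) rewrites the right-hand side as $\sum_{r}\bigl|\wh{1_A}(r)\bigr|^{2}\wh{1_A}(r)$. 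Finally, isolating the $r=0$ term, which contributes $\wh{1_A}(0)^{3}=\alpha^{3}$, gives the claimed identity.

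The only subtlety is keeping the normalisation consistent and handling the complex conjugation so that the cubic summand appears in the specific form $|\wh{1_A}(r)|^{2}\wh{1_A}(r)$ rather than its conjugate; but once the averaged-convolution convention is fixed this is a mechanical re-indexing. Otherwise the argument is entirely routine application of Plancherel to the obvious reformulation of the sum-free condition.
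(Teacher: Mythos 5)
Your proof is correct and follows exactly the route the paper indicates: reformulate sum-freeness as $1_A\cdot(1_A * 1_A)\equiv 0$, take the expectation, apply Plancherel, and re-index via $r\mapsto -r$ (using $\overline{\wh{1_A}(r)}=\wh{1_A}(-r)$) to obtain the cubic form $\lvert\wh{1_A}(r)\rvert^{2}\wh{1_A}(r)$ before isolating the $r=0$ term. The paper itself does not spell out these steps, citing the standard reference instead, but your argument is the intended one.
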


In fact, this sum is dominated by its largest terms.

\begin{lemma} \label{lemma:taillemma}
    Let $k$ be a positive integer. For any $p$ such that $k < (p-1) / 2$, if
    $A \subseteq \F_p$ then \[
        \sum_{i> k }\delta_i^3 \rightarrow 0
    \] 
    as $k \rightarrow \infty$, uniformly in $A$ provided $\alpha \geq \alpha_0$.
\end{lemma}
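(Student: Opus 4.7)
The plan is to use Parseval's identity to get a uniform bound on the $\ell^2$ tail of the $\delta_i$, and then combine this with the monotonicity of the sequence to extract a bound on the $\ell^3$ tail.

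First, I would apply Parseval's identity to $f = 1_A$. Since $\E[1_A^2] = \alpha$ and $|\widehat{1_A}(0)|^2 = \alpha^2$, splitting the non-zero Fourier coefficients into the pairs $\{r_i, -r_i\}$ (where each contributes $\delta_i^2\alpha^2$ to the sum, by the conjugation relation) gives
\[
    \alpha^2 + 2\alpha^2\sum_{i=1}^{(p-1)/2}\delta_i^2 = \alpha,
\]
so $\sum_i \delta_i^2 = (1-\alpha)/(2\alpha) \leq 1/(2\alpha_0)$. This bound is independent of $A$ and $p$.

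Next, because the $\delta_i$ are arranged in decreasing order, I have $k\delta_k^2 \leq \sum_{i \leq k}\delta_i^2 \leq 1/(2\alpha_0)$, so $\delta_{k+1} \leq \delta_k \leq 1/\sqrt{2\alpha_0 k}$. Since $\delta_i \leq \delta_{k+1}$ for every $i > k$, I can then estimate
\[
    \sum_{i > k}\delta_i^3 \leq \delta_{k+1}\sum_{i>k}\delta_i^2 \leq \frac{1}{2\alpha_0}\cdot\frac{1}{\sqrt{2\alpha_0 k}},
\]
which tends to $0$ as $k \to \infty$, uniformly in $A$ (and in $p$, so long as the tail is non-empty, i.e.\ $k < (p-1)/2$).

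There is no real obstacle here: the argument is a textbook tail estimate, and the only subtle point is making sure the $\ell^2$ bound on the $\delta_i$ does not degenerate as $\alpha \to 0$, which is handled by the standing assumption $\alpha \geq \alpha_0 > 0$.
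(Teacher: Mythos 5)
Your proof is correct and follows essentially the same approach as the paper: Parseval's identity gives the uniform $\ell^2$ bound $\sum_i \delta_i^2 \leq (1-\alpha)/(2\alpha)$, monotonicity gives $\delta_k = O_{\alpha_0}(k^{-1/2})$, and pulling out $\delta_{k+1}$ from the $\ell^3$ tail gives the estimate. The only (immaterial) difference is that you drop the factor of $1-\alpha$, so your constants are marginally looser than the paper's $k^{-1/2}\bigl((1-\alpha_0)/(2\alpha_0)\bigr)^{3/2}$.
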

\begin{proof}
    From Parseval's identity we know
    \[
        \alpha^{2} + 2\alpha^{2}\sum_{i\geq 1}\delta_i^2 \leq \alpha
    ,\] whence, looking at the first $k$ terms of the sum,
    \[
        \delta_k^2 \leq \frac{1-\alpha}{2k\alpha}.
    \]
    So
    \[
        \sum_{i>k}\delta_i^3 
        \leq \delta_k\sum_{i>k}\delta_i^2
        \leq k^{- 1 / 2}\left( \frac{1-\alpha}{2\alpha} \right)^{{3}/{2}}
        \leq k^{- 1 / 2}\left( \frac{1 - \alpha_0}{2\alpha_0} \right)^{{3} /{2}}
        \rightarrow 0
    .\] 
\end{proof}

\begin{corollary} \label{cor:sumdeltacubes}
    If $A$ is sum-free then
    \[
        \sum_{i=1}^{k} \delta_i^{3}
        \geq \delta_1^{3}\abs{\cos\theta_1} + \sum_{i=2}^{k}\delta_i^{3}
        \geq \frac{1}{2} - o_{k \rightarrow \infty}(1)
    ,\] where the error is uniform in $A$ provided $\alpha \geq \alpha_0$.
\end{corollary}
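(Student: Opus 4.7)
The plan is to derive a clean identity from Proposition \ref{prop:sumofcubes} by pairing conjugate Fourier coefficients, and then to use the tail bound in Lemma \ref{lemma:taillemma} to truncate.

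First, I would pair up the terms in the sum $\sum_{r\neq 0}|\wh{1_A}(r)|^2\wh{1_A}(r)$ according to the involution $r \mapsto -r$. Since $1_A$ is real-valued we have $\wh{1_A}(-r) = \overline{\wh{1_A}(r)}$, so writing $\wh{1_A}(r_i) = \delta_i\alpha\, e(\theta_i)$ with $\theta_i$ the argument as in the setup, the contribution of the pair $\{r_i, -r_i\}$ is $2\delta_i^3\alpha^3\cos\theta_i$. Dividing through by $\alpha^3$, Proposition \ref{prop:sumofcubes} collapses to
\begin{equation*}
    1 + 2\sum_{i=1}^{(p-1)/2}\delta_i^3 \cos\theta_i = 0,
\end{equation*}
and in particular $\sum_i \delta_i^3\cos\theta_i = -\tfrac{1}{2}$.

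Next, by the triangle inequality applied to this identity I obtain
\begin{equation*}
    \tfrac{1}{2} \leq \sum_{i\geq 1}\delta_i^3\abs{\cos\theta_i}.
\end{equation*}
Isolating the $i=1$ term and bounding $\abs{\cos\theta_i} \leq 1$ for $i \geq 2$ gives $\tfrac{1}{2} \leq \delta_1^3\abs{\cos\theta_1} + \sum_{i\geq 2}\delta_i^3$. Finally, I would split off the tail $\sum_{i > k}\delta_i^3$ and invoke Lemma \ref{lemma:taillemma} (with a uniform error under $\alpha \geq \alpha_0$) to replace the infinite sum by $\sum_{i=2}^{k}\delta_i^3$ at the cost of an $o_{k\to\infty}(1)$ term. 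The first inequality in the displayed chain is then immediate from $\abs{\cos\theta_1}\leq 1$.

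There is no real obstacle here: the argument is essentially a rearrangement of Proposition \ref{prop:sumofcubes} plus an application of Lemma \ref{lemma:taillemma}. The only care needed is to verify that the pairing uses each nonzero $r\in\F$ exactly once (which is ensured by the indexing set $\{r_1,\dots,r_{(p-1)/2}\}$ being a system of representatives for $\F^*/\{\pm 1\}$), and to track that the $o(1)$ error is uniform in $A$, which is exactly what Lemma \ref{lemma:taillemma} delivers under the hypothesis $\alpha\geq\alpha_0$.
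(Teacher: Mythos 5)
Your proof is correct and takes essentially the same approach as the paper: both pair the Fourier coefficients at $\pm r_i$ to exploit conjugate symmetry, isolate the $i=1$ term keeping the factor $\abs{\cos\theta_1}$, bound $\abs{\cos\theta_i}\leq 1$ for $i\geq 2$, and invoke Lemma \ref{lemma:taillemma} to truncate. The only cosmetic difference is that you first record the exact identity $1 + 2\sum_i\delta_i^3\cos\theta_i = 0$ before applying the triangle inequality, whereas the paper applies the triangle inequality directly to $\alpha^3 = \bigl|\sum_{r\neq 0}\bigl|\wh{1_A}(r)\bigr|^2\wh{1_A}(r)\bigr|$; the content is identical.
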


\begin{proof}
    The first inequality is immediate. For the second, we begin with Proposition
    \ref{prop:sumofcubes} and make two
    applications of the triangle inequality.
    \begin{align*}
        \alpha^3 
        &= \Bigl|\sum_{r \neq 0} \bigr|\wh{1_A}(r)\bigl|^2 \wh{1_A}(r)\Bigr| \\
        &= \Bigl| \sum_{i=1}^{(p-1)/2} \delta_i^2\alpha^2
            \left(\wh{1_A}(r_i) + \wh{1_A}(-r_i)\right) \Bigr| \\
        &\leq \sum_{i=1}^{ (p-1) / 2} \delta_i^2\alpha^2
            \left| \wh{1_A}(r_i) + \wh{1_A}(-r_i) \right| \\
        &\leq \delta_1^2\alpha^2\abs{2\delta_1\alpha\cos\theta_1}
            + \sum_{i=2}^{ (p-1) / 2}
                \delta_i^2\alpha^2\left( \bigl|\wh{1_A}(r_i)\bigr|
            + \bigl|\wh{1_A}(-r_i)\bigr| \right) \\
        &= 2\delta_1^3\alpha^3\abs{\cos\theta_1}
            + \sum_{i=2}^{ (p-1) / 2} 2\delta_i^3\alpha^3    
    \end{align*}
    Now divide through by $2\alpha^3$ and apply Lemma \ref{lemma:taillemma}.
\end{proof}

Another corollary of Proposition \ref{prop:sumofcubes} gives bounds on
$\alpha$ in terms of the sizes of the largest two Fourier coefficients.
The first, which considers only $\delta_1$, is standard
(\emph{c.f.~}\cite[p.~226]{lev2006}). The second is stronger when
$\delta_2$ is small compared to $\delta_1$.

\begin{corollary} \label{cor:sumfreebound}
    If $A$ is sum-free then \[
        \alpha \leq \frac{\delta_1}{1 + \delta_1}
    .\] Moreover, if $1 + \delta_2 + 2\delta_1^{2}\delta_2 - 2\delta_1^{3} > 0$
    then \[
        \alpha \leq  \frac{\delta_2}
                    {1 + \delta_2 + 2 \delta_1^{2}\delta_2 - 2\delta_1^{3}}
    .\]
\end{corollary}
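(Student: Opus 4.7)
The starting point for both bounds is to take absolute values in Proposition \ref{prop:sumofcubes} and apply the triangle inequality, which yields
\[
    \alpha^3 \;=\; \Bigl| \sum_{r \neq 0} \bigl|\wh{1_A}(r)\bigr|^2 \wh{1_A}(r) \Bigr|
    \;\leq\; \sum_{r \neq 0} \bigl|\wh{1_A}(r)\bigr|^3.
\]
Combined with the Parseval identity, applied to $1_A$, which gives
\[
    \sum_{r \neq 0} \bigl|\wh{1_A}(r)\bigr|^2 \;=\; \alpha - \alpha^2,
\]
this reduces both parts to bounding a sum of cubes in terms of the top one or two Fourier modes.

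For the first bound, I would simply estimate the cube by the product of the largest modulus and the sum of squares: since each $|\wh{1_A}(r)| \leq \delta_1 \alpha$,
\[
    \alpha^3 \;\leq\; \delta_1 \alpha \sum_{r \neq 0} \bigl|\wh{1_A}(r)\bigr|^2 \;=\; \delta_1 \alpha \cdot \alpha(1-\alpha),
\]
and rearranging gives $\alpha(1 + \delta_1) \leq \delta_1$, i.e.\ $\alpha \leq \delta_1/(1+\delta_1)$.

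For the second bound, the key idea is to separate out the contributions from $r_1$ and $-r_1$ (which contribute $2 \delta_1^3 \alpha^3$ to the cube sum) and apply the $\delta_1$-trick only to the remaining tail, bounding that tail by $\delta_2$ instead of $\delta_1$. Explicitly, I would write
\[
    \sum_{r \neq 0} \bigl|\wh{1_A}(r)\bigr|^3 \;=\; 2\delta_1^3 \alpha^3 + 2\alpha^3 \sum_{i \geq 2} \delta_i^3
    \;\leq\; 2 \delta_1^3 \alpha^3 + 2\alpha^3 \delta_2 \sum_{i \geq 2} \delta_i^2,
\]
and then use Parseval again in the form $\sum_{i \geq 2} \delta_i^2 = \tfrac{1-\alpha}{2\alpha} - \delta_1^2$. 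Substituting back into $\alpha^3 \leq \sum |\wh{1_A}(r)|^3$, dividing by $\alpha^3$, and collecting the $\alpha$-terms gives
\[
    \alpha \bigl( 1 + \delta_2 + 2\delta_1^2 \delta_2 - 2\delta_1^3 \bigr) \;\leq\; \delta_2,
\]
which yields the claimed inequality under the positivity assumption on the denominator.

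There is no serious obstacle here: both arguments are short algebraic manipulations of Proposition \ref{prop:sumofcubes} and Parseval. The only subtlety is bookkeeping — remembering that each unordered pair $\{r_i, -r_i\}$ contributes twice to the Parseval sum, and being careful with the hypothesis ensuring the denominator is positive so that the inequality can actually be rearranged into a bound on $\alpha$.
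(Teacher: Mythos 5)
Your proof is correct and is essentially the paper's proof. For the second bound you apply the triangle inequality first and then split off the $\pm r_1$ contribution, whereas the paper splits off $\pm r_1$ before fully passing to moduli, but the resulting inequality $\alpha^3 \leq 2\delta_1^3\alpha^3 + \delta_2\alpha\bigl(\alpha - \alpha^2 - 2\delta_1^2\alpha^2\bigr)$ is identical, and the first bound is the same argument specialised to $\delta_1$ (which the paper states "is proved similarly").
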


\begin{proof} 
    We prove the second bound. The first is proved similarly. We begin with
    Proposition \ref{prop:sumofcubes}: 
        \begin{align*}
            \alpha^3 
            &= \Bigl|\sum_{r\neq0}\bigl|\wh{1_A}(r)\bigr|^2\wh{1_A}(r)\Bigr| \\
            &\leq 2\delta_1^3\alpha^3
                + \Bigl|\sum_{r \neq 0, \pm r_1}
                \bigl|\wh{1_A}(r)\bigr|^2\wh{1_A}(r) \Bigr| \\
            &\leq 2\delta_1^3\alpha^3 + \delta_2\alpha \sum_{r \neq0, \pm r_1}
                \bigl|\wh{1_A}(r)\bigr|^2 \\
            &= 2\delta_1^3\alpha^3
                + \delta_2\alpha
                    \left(\alpha - \alpha^2 - 2\delta_1^2\alpha^2\right).
        \end{align*}
        To get the final step here we use Parseval's identity.
        Now rearrange to find \[
            \alpha
            \left(1 + \delta_2 + 2\delta_1^{2}\delta_2 - 2\delta_1^{3}\right)
            \leq  \delta_2
        \] 
        and apply the hypothesis.
\end{proof}

\subsection{Properties of sets which are closed under inverses}

To exploit the fact that $A = A^{-1}$ we will make use of the following result
from \cite[Proposition~1]{bombieri1971}, which can be thought of as a version
of Bessel's inequality for vectors which are `almost orthogonal'.

\begin{lemma} \label{lemma:almostorthogonal}
    Let $H$ be a Hilbert space with inner product $\langle \, , \rangle$. Then
    for any $f, \varphi_1, \dots, \varphi_M \in H$ we have the inequality \[
        \lVert f \rVert^2
        \geq \sum_{i=1}^{M}
            \frac{\abs{\langle f, \varphi_i\rangle}^2}
                 {\sum_{j=1}^M\abs{\langle \varphi_i, \varphi_j\rangle}}
    .\] 
\end{lemma}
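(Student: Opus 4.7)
The statement is Bombieri's almost-orthogonality inequality, and the natural route is to apply Cauchy--Schwarz to $f$ against a well-chosen test vector $g = \sum_{i=1}^{M} c_i \varphi_i$ and then control $\lVert g\rVert^2$ purely in terms of the quantity we want on the right-hand side. Write $S_i \coloneqq \sum_{j=1}^{M} \abs{\langle \varphi_i, \varphi_j\rangle}$ for the row sums, and write $T$ for the sum appearing on the right of the desired inequality, i.e.\ $T \coloneqq \sum_i \abs{\langle f, \varphi_i\rangle}^2 / S_i$. If $T = 0$ there is nothing to prove, so assume $T > 0$.

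The plan is to choose $c_i \coloneqq \overline{\langle f, \varphi_i\rangle}/S_i$ (so that the $c_i$ carry the phases of the inner products $\langle f, \varphi_i\rangle$ and are weighted by $1/S_i$). With this choice, $\langle f, g\rangle = \sum_i \overline{c_i}\langle f, \varphi_i\rangle = T$, so Cauchy--Schwarz yields $T^2 = \abs{\langle f, g\rangle}^2 \leq \lVert f\rVert^2 \lVert g\rVert^2$. The whole lemma therefore reduces to showing $\lVert g\rVert^2 \leq T$; once this is in hand, dividing through by $T$ finishes the argument.

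To bound $\lVert g\rVert^2 = \sum_{i,j} c_i \overline{c_j}\langle \varphi_i, \varphi_j\rangle$, I would apply the triangle inequality to pass to $\sum_{i,j} \abs{c_i}\abs{c_j}\abs{\langle \varphi_i, \varphi_j\rangle}$, then use AM--GM on each cross term via $\abs{c_i}\abs{c_j} \leq \tfrac{1}{2}(\abs{c_i}^2 + \abs{c_j}^2)$. Exploiting the symmetry $\abs{\langle \varphi_i, \varphi_j\rangle} = \abs{\langle \varphi_j, \varphi_i\rangle}$ collapses the double sum to $\sum_i \abs{c_i}^2 S_i$. By the defining choice of $c_i$ this is exactly $\sum_i \abs{\langle f, \varphi_i\rangle}^2 / S_i = T$, which is precisely the required bound.

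The main obstacle is the bookkeeping in the third step: the AM--GM and the symmetry must conspire so that the weight on $\abs{c_i}^2$ in the resulting single sum is exactly $S_i$, and it is this alignment that forces the specific choice $c_i = \overline{\langle f,\varphi_i\rangle}/S_i$. Any weakening of the AM--GM step (for example, by trying to keep the Gram matrix $\langle \varphi_i, \varphi_j\rangle$ unsigned) loses the clean cancellation, so the argument has to commit to the absolute values from the outset, which is why the inequality is phrased with $\abs{\langle \varphi_i, \varphi_j\rangle}$ rather than $\langle \varphi_i, \varphi_j\rangle$.
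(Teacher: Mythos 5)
Your argument is correct and is essentially the standard proof of Bombieri's inequality; the paper itself does not prove this lemma but simply cites \cite{bombieri1971}, so you are supplying the proof where the paper defers. Your route is the usual duality argument: test $f$ against $g = \sum_i c_i \varphi_i$ via Cauchy--Schwarz, then control $\lVert g\rVert^2$ by the triangle inequality, AM--GM on the cross terms $\abs{c_i}\abs{c_j}$, and the symmetry $\abs{\langle\varphi_i,\varphi_j\rangle} = \abs{\langle\varphi_j,\varphi_i\rangle}$ to collapse the double sum to $\sum_i \abs{c_i}^2 S_i = T$. One small point of bookkeeping: with the paper's convention $\langle f,g\rangle = \E[f\overline{g}]$ (linear in the first slot, conjugate-linear in the second), the coefficients that make $\langle f,g\rangle = T$ are $c_i = \langle f,\varphi_i\rangle/S_i$, not their conjugates; you have written $c_i = \overline{\langle f,\varphi_i\rangle}/S_i$, which is the right choice under the opposite (physics) convention. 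Since $\abs{c_i}$ and hence the bound $\lVert g\rVert^2 \le T$ is unaffected, this is purely a convention slip and the proof goes through once the choice is matched to the sesquilinearity convention in use. You may also wish to note explicitly the convention that any term with $S_i = 0$ is interpreted as $0$ (this is forced, since $S_i = 0$ implies $\varphi_i = 0$ and hence $\langle f,\varphi_i\rangle = 0$); both the statement and your proof implicitly rely on it.
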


We also recall Weil's estimate for Kloosterman sums \cite[p.~207]{weil1948}.

\begin{lemma}[Weil's estimate] \label{lemma:kloosterman}
    If $p$ is prime and $a,b$ are integers with $ab \neq 0$ then \[
        \Bigl|\sum_{x \in \F_p^*} e_a(x)e_b(x^{-1})\Bigr| \leq 2\sqrt{p}
    .\] 
\end{lemma}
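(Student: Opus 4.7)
Since Lemma \ref{lemma:kloosterman} is Weil's classical bound from 1948, any proof must ultimately rest on the Riemann hypothesis for curves over finite fields, and this paper takes it as a black box. The natural plan follows the route laid out in Weil's monograph.

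First, I would realise the Kloosterman sum as coming from the Artin--Schreier cover $C \from y^p - y = aX + bX^{-1}$ of the punctured affine line over $\F_p$. Extending scalars to $\F_{p^n}$, the point count $\#C(\F_{p^n})$ decomposes via the additive characters of $\F_{p^n}$ into a main term plus a sum of Kloosterman-type contributions; the case $n=1$ isolates $K(a,b)$ itself. Assembling these counts across $n$ yields an $L$-function whose reciprocal roots are the quantities one needs to bound.

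Next, I would pass to the smooth projective completion $\overline{C}$ and apply the Hasse--Weil inequality $|\#\overline{C}(\F_{p^n}) - (p^n+1)| \le 2g\, p^{n/2}$, combined with a Riemann--Hurwitz calculation accounting for the wild ramification at $0$ and $\infty$ (where the rational function $aX + bX^{-1}$ has simple poles). Identifying the piece of the cohomology cut out by the fixed character $e_p$ as having rank $2$ (the Kloosterman sheaf) then produces the sharp constant in $|K(a,b)| \le 2\sqrt{p}$.

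The main obstacle is of course the Hasse--Weil inequality itself, which is the substantial algebraic-geometric input. If one wanted to bypass this machinery, one could instead run the elementary Stepanov method as developed by Bombieri and Schmidt: construct an auxiliary polynomial in $\F_p[X,Y]$ of controlled degree vanishing with prescribed multiplicity on the relevant solution set of $y^p - y = aX + bX^{-1}$, bound its zero set, and translate back into the desired estimate. Either route yields the stated bound with the sharp constant $2$; for our purposes it suffices to quote the result.
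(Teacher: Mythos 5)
The paper gives no proof of this lemma; it is quoted directly from Weil's 1948 paper, exactly as you conclude is appropriate. Your sketch of the two standard routes (the Artin--Schreier/$L$-function argument resting on the Riemann hypothesis for curves, and the elementary Stepanov--Bombieri alternative) is an accurate summary of how the result is established in the literature, so you and the paper are in the same place: cite Weil and move on.
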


We arrive at a useful bound on the size of a set which is closed under inverses.

\begin{proposition} \label{prop:selfinversebound}
    Suppose $A = A^{-1}$ and let $m\geq 0$.
    Suppose $s_1, \dots, s_m$ are distinct elements of $\F_p^*$ with
    $\bigl|\wh{1_A}(s_i)\bigr| = \lambda_i\alpha$. Then \[
        \alpha \leq \frac{1}{1 + 2\sum_{i=1}^{m}\lambda_i^{2}}
           + O\left(m / \sqrt{p}\right)
    .\]

    Moreover, if $k\geq 0$ then we have the bound \[
        \alpha \leq \frac{1}{1 + 4\sum_{i=1}^{k}\delta_i^{2}}
            + O\left(k / \sqrt{p}\right)
    .\]
\end{proposition}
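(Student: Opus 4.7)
The plan is to apply Bombieri's almost-orthogonal Bessel inequality (Lemma \ref{lemma:almostorthogonal}) to $f = 1_A$ with test vectors that exploit the symmetry $A = A^{-1}$. For each $i = 1, \ldots, m$ I would set $\varphi_i \from \F_p \to \C$ by $\varphi_i(x) = e_{s_i}(x) + e_{s_i}(x^{-1})$ on $\F_p^*$ and $\varphi_i(0) = 0$. Since $1_A(x) = 1_A(x^{-1})$ for every $x$, the substitution $y = x^{-1}$ identifies $\langle 1_A, \varphi_i\rangle$ with $2\wh{1_A}(s_i)$, so $\abs{\langle 1_A, \varphi_i\rangle}^2 = 4\lambda_i^2\alpha^2$. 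To produce the constant $1$ in the denominator of the stated bound, I would also include the trivial vector $\varphi_0 \equiv 1$, for which $\langle 1_A, \varphi_0\rangle = \alpha$ and $\langle \varphi_0, \varphi_0\rangle = 1$.

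The heart of the argument is to estimate the Gram matrix entries $\langle \varphi_i, \varphi_j\rangle$ using Weil's bound on Kloosterman sums (Lemma \ref{lemma:kloosterman}). Expanding $\varphi_i \overline{\varphi_j}$ yields four character sums over $\F_p^*$: two ``diagonal'' sums $\sum_x e_{s_i - s_j}(x^{\pm 1})$, each equal to $p - 1$ if $i = j$ and to $-1$ otherwise; and two ``Kloosterman'' sums $\sum_x e_{s_i}(x)e_{-s_j}(x^{-1})$ and $\sum_x e_{s_i}(x^{-1})e_{-s_j}(x)$, each of modulus at most $2\sqrt p$ by Weil. Dividing by $p$ yields $\langle \varphi_i, \varphi_i\rangle = 2 + O(1/\sqrt p)$ for $i \geq 1$, $\abs{\langle \varphi_i, \varphi_j\rangle} = O(1/\sqrt p)$ for distinct $i, j \geq 1$, and $\abs{\langle \varphi_0, \varphi_i\rangle} = O(1/p)$ for $i \geq 1$ by direct evaluation of a geometric character sum. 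Consequently $\sum_{j=0}^m \abs{\langle \varphi_i, \varphi_j\rangle} = 2 + O(m/\sqrt p)$ for $i \geq 1$ and $=1 + O(m/p)$ for $i = 0$.

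Plugging these into Lemma \ref{lemma:almostorthogonal} with $\lVert 1_A\rVert^2 = \alpha$ gives
\[
\alpha \;\geq\; \frac{\alpha^2}{1 + O(m/p)} + \sum_{i=1}^m \frac{4\lambda_i^2\alpha^2}{2 + O(m/\sqrt p)} \;\geq\; \alpha^2\Bigl(1 + 2\sum_{i=1}^m \lambda_i^2\Bigr) - O\bigl(\alpha\, m/\sqrt p\bigr),
\]
where in the last step I use Parseval in the form $\alpha^2 \sum_i \lambda_i^2 \leq \alpha$ to absorb the $(1 + \sum_i \lambda_i^2)$ factor multiplying the raw error. Dividing through by $\alpha$ and rearranging yields $\alpha(1 + 2\sum_i \lambda_i^2) \leq 1 + O(m/\sqrt p)$, which is the first claim (since the denominator in $1/(1 + 2\sum_i\lambda_i^2)$ is at least $1$). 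The second claim follows by applying the first with $m = 2k$ and $(s_1, \ldots, s_{2k}) = (r_1, -r_1, \ldots, r_k, -r_k)$, using $\wh{1_A}(-r_j) = \overline{\wh{1_A}(r_j)}$ to get $\lambda_{2j-1} = \lambda_{2j} = \delta_j$ and hence $\sum_{i=1}^{2k}\lambda_i^2 = 2\sum_{j=1}^k \delta_j^2$.

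The main technical obstacle is cleanly accounting for the error terms: both the $\varphi_0$ ``numerator'' and the off-diagonal Kloosterman ``denominator'' contributions each introduce $O(1/\sqrt p)$ errors that accumulate over $m$ test vectors, and one must invoke Parseval at exactly the right place to collapse $O(m/\sqrt p)(1 + \sum_i \lambda_i^2)$ down to the stated $O(m/\sqrt p)$, rather than something that blows up with $m$.
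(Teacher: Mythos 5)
Your argument is correct and rests on the same two pillars as the paper's: Bombieri's almost-orthogonality inequality (Lemma~\ref{lemma:almostorthogonal}) and Weil's Kloosterman bound (Lemma~\ref{lemma:kloosterman}). The only difference is cosmetic: the paper feeds $2m+1$ separate test vectors $\varphi_0, e_{s_1}, \ldots, e_{s_m}, \psi_1, \ldots, \psi_m$ (where $\psi_i(x) = e_{s_i}(x^{-1})$) into Bombieri's lemma and uses $A = A^{-1}$ to show $\abs{\langle 1_A, \psi_i\rangle} = \abs{\langle 1_A, e_{s_i}\rangle}$, whereas you symmetrize first, taking the $m+1$ vectors $\varphi_0, e_{s_1} + \psi_1, \ldots, e_{s_m} + \psi_m$ so that $\langle 1_A, e_{s_i} + \psi_i\rangle = 2\wh{1_A}(s_i)$ outright. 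Each symmetrized vector has squared norm $\approx 2$, and $4\lambda_i^2 / 2 = 2\lambda_i^2$ recovers exactly the same per-index contribution as the paper's $\lambda_i^2 + \lambda_i^2$, so the two computations are numerically identical up to the error terms. Your handling of the error accumulation via Parseval is sound, and the reduction of the ``moreover'' part to the first part by taking $s_i = \pm r_j$ matches the paper's.
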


\begin{proof}

Define $s_0 \coloneqq 0$, and so $\lambda_0 = 1$. For each $i$ define
$\varphi_i \coloneqq e_{s_i}$ and, if $i>0$,
$\psi_i(x) \coloneqq \varphi_i(x^{-1})$, with the convention that $0^{-1} = 0$.
We aim to apply Lemma \ref{lemma:almostorthogonal} to $1_A$ and these
`almost orthogonal' functions. For $i\geq 0$ and $j > 0$ we have 
\[
    \abs{\langle \varphi_i, \psi_j \rangle}
    = \frac{1}{p}\Bigl|
        \sum_{x \in \F_p} e_{s_i}(x)\overline{e_{s_j}(x^{-1})} \Bigr|
    = \frac{1}{p}\Bigl|\sum_{x \in \F_p} e_{s_i}(x){e_{-s_j}(x^{-1})}\Bigr|
    \leq \frac{1 + 2\sqrt{p}}{p}
\] by Weil's bound. Also, using the fact that the characters are orthonormal, we have \[
    \langle \psi_i, \psi_j \rangle
    = \underset{x}{\E}\left[\varphi_i(x^{-1})\overline{ \varphi_j(x^{-1})}\right]
    = \underset{x}{\E}\left[ \varphi_i(x) \overline{ \varphi_j(x)} \right]
    = \langle \varphi_i, \varphi _j \rangle
    = \begin{cases}
        1 & \text{if $i = j$,} \\
        0 &\text{otherwise.}
      \end{cases}
\] Finally, \[
    \abs{\langle 1_A, \psi_i \rangle}
    = \frac{1}{p}\Bigl| \sum_{a \in A} \overline{\varphi_i(a^{-1})}\Bigr|
    = \frac{1}{p} \Bigl| \sum_{a \in A} \overline{\varphi_i(a)}\Bigr|
    = \left|\langle 1_A, \varphi_i \rangle\right|
    = \bigl|\wh{1_A}(s_i)\bigr| = \lambda_i \alpha
.\] So, applying Lemma \ref{lemma:almostorthogonal}, we find
\begin{align*}
    \alpha 
    &\geq \sum_{i=0}^m \frac{\lambda_i^2\alpha^2}{1 + m\left(1 + 2\sqrt{p}\right) / p }
        + \sum_{i=1}^m
            \frac{\lambda_i^2\alpha^2}{1 + (m+1)\left(1 + 2\sqrt{p}\right) / p} \\
    &\geq \alpha^2\frac{1 + 2\sum_{i=1}^m \lambda_i^2}
        {1 + (m+1)\left(1 + 2\sqrt{p}\right) / p}
,\end{align*}
from which the result follows.

For the moreover part, take $m = 2k$ and $s_i = r_i = -s_{m-i}$ for each
$i\leq k$.
\end{proof}

\subsection{Constructing large coefficients}

If $\bigl|\wh{1_A}(r)\bigr| = \delta\alpha$ then an observation of Yudin
recorded in \cite[p.~258]{lev2001} yields the following bound on
$\bigl|\wh{1_A}(2r)\bigr|$:
\begin{equation} \label{eq:weakyudin}
    \bigl|\wh{1_A}(2r)\bigr| \geq \left(2\delta^2-1\right)\alpha
.\end{equation}
We strengthen this in two ways. First we show that, given conditions on $\delta$
and the argument $\theta$ of $\wh{1_A}(r)$, the coefficient $\wh{1_A}(2r)$ lies
in the right-half plane of $\C$. Second, we show that given some lower bound on
$\alpha$, we can obtain a slightly stronger lower bound on
$\bigl|\wh{1_A}(2r)\bigr|$. We shall prove \eqref{eq:weakyudin} along the way.
\begin{lemma} \label{lemma:positivecoeff}
    Suppose $r \neq 0$ and $\wh{1_A}(r) = (\delta \alpha)e(\theta)$. Then \[
        2\Re \wh{1_A}(2r)
        = \wh{1_A}(2r) + \wh{1_A}(-2r)
        \geq 2\alpha\left(2\delta^{2}\cos^2\theta -1\right)
    .\] Moreover, if $\alpha \geq \alpha_0 > 0$ then \[
        \bigl|\wh{1_A}(2r)\bigr|
        \geq \left(2\delta^{2} - 1 + \eps -o(1)\right)\alpha
    \] as $p \rightarrow \infty$, where the error is uniform in $A$ and
    $\eps >0$, which depends only on
    $\alpha_0$, is given by \[
        \eps = \frac{2^{9}}{3^{4} \times 5^{5}} {\alpha_0}^{4}
    .\] 
\end{lemma}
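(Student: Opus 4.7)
I take as test function $g_t(x) \coloneqq (1 + 2t\cos(2\pi rx/p))^{2}$, which is pointwise non-negative, so that $\E[1_A g_t] \geq 0$. Expanding the square in its Fourier modes and substituting $\wh{1_A}(r) = (\delta\alpha) e(\theta)$, the inequality becomes
\[
    2\bigl(\alpha + \Re\wh{1_A}(2r)\bigr) t^{2} + 4\delta\alpha\cos\theta \cdot t + \alpha \geq 0
\]
for every $t \in \R$. Viewing this as a quadratic in $t$ and requiring its discriminant to be non-positive yields $\Re\wh{1_A}(2r) \geq \alpha(2\delta^{2}\cos^{2}\theta - 1)$, and doubling (using $\wh{1_A}(-2r) = \overline{\wh{1_A}(2r)}$) produces the stated inequality.

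\textbf{Plan for the moreover part.} I would quantify the slack in the above under the hypothesis $\alpha \geq \alpha_{0} > 0$. Equality in the discriminant bound forces the quadratic to vanish at its double root $t^{*} = -1/(2\delta\cos\theta)$, which in turn forces $g_{t^{*}}(x) = 1 - \cos(2\pi rx/p)/(\delta\cos\theta) = 0$ for every $x \in A$. Thus $A$ would have to be contained in the level set $\{x \in \F_p : \cos(2\pi rx/p) = \delta\cos\theta\}$, which has at most two elements \textemdash\ contradicting $|A| \geq \alpha_{0}p$ for large $p$. To make this quantitative, by the equidistribution of $\{2\pi rx/p \pmod{2\pi} : x \in \F_p\}$ as $p \to \infty$, the set $A$ must intersect a cosine-level arc of width bounded below in terms of $\alpha_{0}$. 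Averaging $g_{t^{*}}$ over the extremal (arc-concentrated) configuration of $A$ then gives a quantitative lower bound $\E[1_A g_{t^{*}}] \geq c\alpha_{0}^{4} - o(1)$, which when substituted back into the Fourier identity from Step~1 produces $|\wh{1_A}(2r)| \geq \Re(e^{2i\theta}\overline{\wh{1_A}(2r)}) \geq (2\delta^{2} - 1 + \eps - o(1))\alpha$, after first rotating the phase of the test function to $\phi = \theta$ (so that one works with the magnitude bound $(2\delta^{2} - 1)\alpha$ rather than $(2\delta^{2}\cos^{2}\theta - 1)\alpha$).

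\textbf{Main obstacle.} The hard part is executing the extremal calculation sharply enough to recover the precise constant $\eps = \frac{2^{9}}{3^{4} \cdot 5^{5}}\alpha_{0}^{4}$. The natural arc-spreading estimate produces an improvement proportional to $\alpha_{0}^{a}(1 - \delta^{2}\cos^{2}\theta)^{b}$ for some exponents $a, b$, whose dependence on $\delta$ and $\theta$ must be optimized away (or treated by case-splitting on how close $\delta^{2}$ is to the boundary value $(1 - \eps/2)$, beyond which the stated bound becomes vacuous since $|\wh{1_A}(2r)| \leq \alpha$) in order to obtain an $\eps$ that is uniform in both $\delta$ and $\theta$. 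Controlling the discrete-to-continuous approximation uniformly in $A$ is what gives the $o(1)$ error term.
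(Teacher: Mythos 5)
Your first inequality is correct, and it is in substance the same argument the paper uses: requiring $\E\bigl[1_A(x)\bigl(1 + 2t\cos(2\pi rx/p)\bigr)^2\bigr] \geq 0$ for all real $t$, then discarding the discriminant, is precisely the variational form of the Cauchy--Schwarz inequality
\[
    \E[1_A]\,\E\bigl[1_A(e_r + e_{-r})^2\bigr] \geq \E\bigl[1_A(e_r + e_{-r})\bigr]^2,
\]
which is what the paper applies (with $\omega = 1$ in its notation). So this part is fine, though not a different route.

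For the \emph{moreover} part you have correctly located the idea --- the slack in Cauchy--Schwarz is a variance term which cannot vanish because $A$ has density at least $\alpha_0$ --- but you do not carry it out, and the obstacle you anticipate is not the real one. You worry that the gain will come out proportional to $\alpha_0^a(1-\delta^2\cos^2\theta)^b$ and require case-splitting near the boundary; this does not happen. After rotating to $\omega = e(-\theta)$ (as you propose), the variance term is $\E_{x\in A}\bigl[(2\cos(2\pi rx/p + \theta) - 2\delta)^2\bigr]$, which factors \emph{exactly} as $16\sin^2 t_1(x)\sin^2 t_2(x)$ with $t_{1,2}(x) = \pi rx/p + \theta/2 \pm \varphi/2$, $\varphi = \arccos\delta$. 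Fixing lifts so that $\abs{\pi rx/p + \theta/2} \leq \pi/2$ gives $\abs{t_i(x)} \leq 3\pi/4$ uniformly in $\delta$ and $\theta$ (since $\varphi/2 \leq \pi/4$), so $\abs{\sin t_i} \geq m\abs{t_i}$ with the absolute constant $m = 2\sqrt{2}/(3\pi)$ by concavity. A counting argument then replaces your informal ``equidistribution'' step: for any $\gamma > 0$ the set $\{x : \abs{t_i(x)} \leq \gamma\}$ has at most $1 + (2\gamma/\pi)p$ points, so a set of density $\geq \alpha_0$ cannot concentrate on the bad arcs, giving $\E_{x\in A}[t_1^2 t_2^2] \geq \bigl(1 - 4\gamma/(\alpha_0\pi)\bigr)\gamma^4 - o(1)$; optimizing at $\gamma = \pi\alpha_0/5$ produces $\eps = 2^{9}\alpha_0^{4}/(3^{4}\cdot 5^{5})$ with no residual dependence on $\delta$ or $\theta$. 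As written, your proposal for the moreover part is a plan rather than a proof: the sine factorization, the uniform concavity bound, and the explicit choice of $\gamma$ are the missing ingredients, and they are what make the constant absolute.
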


\begin{proof}
    For any $\omega \in S^{1}$, it can be seen that
    \begin{equation}\label{eq:yudin}
        \underset{x}{\E}
            \left[ 1_A(x)
                \left(\overline{\omega}e_r(x) + \omega e_{-r}(x)\right)^2
            \right]
        = 2\alpha + \omega^{2}\wh{1_A}(2r) + \overline{\omega}^{2}\wh{1_A}(-2r)
.\end{equation}
By applying Cauchy-Schwarz we can compute
\begin{align*}
    \underset{x}{\E}\left[ 1_A(x) \right] 
        \underset{x}{\E}\left[ 1_A(x)
            \left(\overline{\omega}e_r(x) + \omega e_{-r}(x)\right)^2
        \right]
    &\geq \underset{x}{\E}\left[ 1_A(x)
        \left(\overline{\omega} e_r(x) + \omega e_{-r}(x) \right)
    \right]^2 \\
    &= \left( \omega\wh{1_A}(r) + \overline{\omega}\wh{1_A}(-r) \right)^2
.\end{align*}
Setting $\omega = 1$ and substituting in \eqref{eq:yudin} then gives
\[
    \alpha\left(2\alpha + \wh{1_A}(2r) + \wh{1_A}(-2r)\right)
    \geq \left(\wh{1_A}(r) + \wh{1_A}(-r)\right)^{2}
    = 4\delta^{2}\alpha^{2}\cos^{2}\theta
,\]
from which the first inequality follows.

If instead we take $\omega = e(-\theta)$ then we find \[
    \alpha\left(2\alpha + \omega^{2}\wh{1_A}(2r) +
        \overline{\omega}^{2}\wh{1_A}(-2r) \right)
    \geq \left( \bigl|\wh{1_A}(r)\bigr| + \bigl|\wh{1_A}(r)\bigr| \right)^2
    = \left( 2\delta\alpha \right)^{2}
\] which rearranges with the triangle inequality to give \eqref{eq:weakyudin}.

The Cauchy-Schwarz inequality $\E[XY]^2 \leq \E[X^2]\E[Y^2]$ is only close to
equality when the random variables $X$ and $Y$ are close to proportional. However, $1_A(x)$ and \[     
    1_A(x) \cdot \left( \overline{\omega}e_r(x) + \omega e_{-r}(x) \right)
    = 1_A(x) \cdot 2\cos(2 \pi r x / p + \theta)
\] are not approximately proportional, since $A$ is not thin.

Concretely, set $\omega = e(-\theta)$ again. Using the fact that
$\E[X^2] = \E[(X - \E[X])^2] + \E[X]^2$ for a random variable $X$,
we can compute
\begin{align*}
    \underset{x \in \F_p}{\E}\left[
        1_A(x)(\overline{\omega} e_{r}(x) + \omega e_{-r}(x))^2
    \right] 
    &= \alpha \underset{x \in A}{\E} \left[
        (\overline{\omega} e_{r}(x) + \omega e_{-r}(x) )^2
    \right]  \\
    &= \alpha\underset{x \in A}{\E} \left[
        (\overline{\omega}e_{r}(x) + \omega e_{-r}(x) - 2\delta)^{2}
    \right] +4\delta^{2}\alpha \\
    &= \alpha\underset{x \in A}{\E} \left[
        \left( 2\cos(2 \pi r x / p + \theta) - 2\cos\varphi \right)^2 \right]
    +4\delta^{2}\alpha \\
    &= 16\alpha\underset{x \in A}{\E} \left[
        \sin^2\left(t_1(x)\right)\sin^2\left(t_2(x)\right)
    \right] +4\delta^{2}\alpha
,\end{align*}
where $\varphi \coloneqq \arccos(\delta) \in [0 ,\pi / 2]$,
$t_1(x) \coloneqq\pi r x / p + \theta/2 + \varphi/2$ and
$t_2(x) \coloneqq \pi r x / p + \theta/2 - \varphi/2$.

We should be explicit about the fact that we are dealing with lifts
$\tilde{y} \in \mathbb{Z}$ of the elements $y = rx \in \F_p$. We can make any
choice of lift we like, so let us fix the lift so that
$\abs{\pi r x / p + \theta / 2} \leq  \pi/ 2$. It follows that \[
    \left|t_i(x)\right| \leq \pi / 2 + \varphi / 2 \leq 3\pi/4
    \] for $i= 1, 2$. Writing \[ 
    m = \frac{2\sqrt{2}}{3\pi},
\] we therefore have that\footnote{This bound can be derived by
considering the concavity of $\sin t$ in the region $0\leq t\leq 3 \pi / 4$.}
\begin{equation}\label{eq:concavitybound}
    \left|\sin(t_i(x))\right| \geq  m \left|t_i(x)\right|
.\end{equation}

Now observe that, for any $\gamma$, $\abs{t_1(x)} \leq  \gamma$ for at most
$1 + \frac{2\gamma}{\pi}p$ values of $x$. Similarly for $t_2$. We therefore
have that $t_1(x)^{2}t_2(x)^{2} \leq \gamma^{4}$ for at most
$2 + \frac{4\gamma}{\pi}p$ values of $x$. Thus
\begin{align*}
    \underset{x \in A}{\E}\left[\sin^2(t_1(x))\sin^{2}(t_2(x)) \right] 
    &\geq m^4 \underset{x \in A}{\E}
        \left[t_1(x)^{2}t_2(x)^{2}\right] \\ 
    &\geq m^4 \left(1 - \frac{4\gamma}{\alpha_0\pi}
        - \frac{2}{\alpha_0 p}\right) \gamma^{4}  \\
    &= m^4 \left(1 - \frac{4\gamma}{\alpha_0\pi}\right)\gamma^4 - o(1)
.\end{align*} 
Taking $\gamma = \frac{\pi}{5} \times \alpha_0$ makes 
$\left(1 - \frac{4\gamma}{\alpha_0\pi}\right) \gamma^{4}
    = {\alpha_0}^{4} \times \frac{\pi^{4}}{5^{5}}.$ 

Starting from \eqref{eq:yudin} we can now compute
\begin{align*} 
    \omega^{2}\wh{1_A}(2r) + \overline{\omega}^{2}\wh{1_A}(-2r)
    &= \underset{x \in \F_p}{\E}\left[ 1_A(x)\left(
        \overline{\omega}e_r(x) + \omega e_{-r}(x)
    \right)^2 \right] -2\alpha \\
    &\geq 16\alpha\underset{x \in A}{\E} \left[
        \sin^2\left(t_1(x)\right)\sin^2\left(t_2(x)\right)
    \right] +4\delta^{2}\alpha - 2\alpha \\
    &\geq 2\left(2\delta^2 - 1 + 8m^4\pi^4{\alpha_0}^4/ 5^5
        - o(1)\right)\alpha,
\end{align*}
from which the triangle inequality gives the result with
\[
    \eps = \frac{8 m^{4} \pi ^{4}}{5^{5}}{\alpha_0}^{4}
    = \frac{2^{9}}{3 ^{4} \times 5^{5}}{\alpha_0}^{4}
.\] 
\end{proof}
\begin{remarks}
    If a lower bound on $\delta$ is assumed then $\eps$ can be made slightly
    larger, by strengthening the bound in \eqref{eq:concavitybound}.

    We also have as a corollary that \[
        \bigl| \wh{1_A}(r) \bigr|
        \leq \left(1 - \Omega\left({\alpha_0}^{4}\right)
            + o_{p\rightarrow\infty}(1)\right)\alpha
    \] for any $r \neq 0$. A consequence of 
    \cite[Theorem~5]{lev2001}, is the stronger result that \[
        \bigl| \wh{1_A}(r) \bigr|
        \leq \left(1 - \Omega\left({\alpha_0}^{2}\right)
            + o_{p\rightarrow\infty}(1)\right)\alpha
    \] for any $r \neq 0$. This suggests that the factor of ${\alpha_0}^{4}$
    in $\eps$ could be replaced with a factor of ${\alpha_0}^{2}$ with some
    more work.
\end{remarks}
\subsection{Proof of Theorem \ref{thm:bound}} 

The proof of Theorem \ref{thm:bound} is a case analysis on the values of
$\wh{1_A}(r_i)$.  If $\delta_1$ and $\delta_2$ are both small, then Corollary
\ref{cor:sumfreebound} is strong enough. Otherwise, we use Proposition
\ref{prop:selfinversebound}. The question then becomes: given that $\delta_1$
is large, how small can $\sum_{i=1}^{k}\delta_i^{2}$ be under the constraints,
such as Corollary \ref{cor:sumdeltacubes}, implied by the sum-free condition? 

We will make use of the following fact for $x_1,\dots, x_n \in [0,1]$, which is
an instance of nesting of $\ell_p$-norms:
\begin{equation} \label{eq:optimisation}
    \Big(\sum_{i=1}^{n}x_i^2\Big) \geq \Big(\sum_{i=1}^{n}x_i^{3}\Big)^{{2}/{3}}
.\end{equation}

\begin{proof}[Proof of Theorem \ref{thm:bound}]
    We can assume that $\alpha \geq 0.24$, since otherwise we are done.
    We shall reason based on the value of $\delta_1$. First, we make an
    observation common to several of the cases. If we can show that there is an $h > 0$ so that 
    \[
        \sum_{i=1}^{k}\delta_i^{2}
        \geq 0.75 + h - o_{k \rightarrow \infty}(1),
   \] where the error is uniform in $A$, then applying Proposition \ref{prop:selfinversebound}
    will yield
    \begin{gather}
        \alpha \leq \frac{1}{1 + 4 \times \left(0.75 + h - o_{k \rightarrow \infty}(1)\right)} + O(k / \sqrt{p}) \nonumber \\
               < 0.25 - c_h + o_{k \rightarrow \infty}(1) + O(k / \sqrt{p})
               \label{eq:finalbound} \tag{$\dagger$}
    \end{gather}
    for some $c_h > 0$ depending only on $h$. Now, begin by 
    choosing $k$ large enough that the $o_{k \rightarrow \infty}(1)$ in
    \eqref{eq:finalbound} is less than $c_h / 3$. Then, choose $p$ large enough
    that the $O(k / \sqrt{p})$ in \eqref{eq:finalbound} is also less than
    $c_h / 3$. 
    Then $\alpha < 0.25 - c_h / 3$ as required.
    \case{Case 1: $\delta_1 \leq 0.33$} Recall the first bound from Corollary
    \ref{cor:sumfreebound}: \[
         \alpha \leq \frac{\delta_1}{1 + \delta_1}
    .\] 
    Note that as long as $\delta_1 < 1 / 3$, this is enough to bound
    $\alpha < 0.25$. In particular, 
    here we have \[
        \alpha
        \leq \frac{\delta_1}{1 + \delta_1}
        \leq \frac{ 0.33}{1.33} < 0.2482
    .\] 

    \case{Case 2: $0.33 \leq \delta_1 \leq  0.45$} Now the first conclusion of
    Corollary \ref{cor:sumfreebound} is not enough, but we can argue based on
    the value of $\delta_2$. If $\delta_2$ is small, then the second conclusion
    of Corollary \ref{cor:sumfreebound} will suffice. Otherwise, we can force
    $\sum_{i=1}^{k}\delta_i^{2}$ to be large and apply \eqref{eq:finalbound}.
    So, write $\delta_2 = a\delta_1$ where $a \in (0,1]$. 

\case{Case 2.1: $a \leq 0.7$} Apply the second conclusion of
Corollary \ref{cor:sumfreebound}, noting that the hypothesis on $\delta_1$ and
$\delta_2$ is met, to get \[
    \alpha \leq \frac{a\delta_1}{1 + a\delta_1 + 2a\delta_1^{3} - 2\delta_1^{3}}
    \leq \max_{x, y}
        \frac{xy}{1 + xy + 2x^{3}y - 2x^{3}}
,\] 
where the maximum is taken over the range $0.33 \leq x\leq 0.45$, $0\leq y\leq 0.7$.

This expression is increasing in $y$ since $x^3 \leq 1 / 2$, so \[
    \alpha
    \leq \max_{x}\frac{0.7 x}{1 + 0.7x - 0.6 x^3}
    \leq \max_x \frac{0.7 x}{1 + 0.7x - 0.6 \times 0.45 ^3 }
.\] The expression on the right hand side increases with $x$, so plugging in
$x = 0.45$ gives $\alpha < 0.24994$.

\case{Case 2.2: $a \geq 0.7$} Applying Corollary \ref{cor:sumdeltacubes} gives
\[
    \sum_{i=3}^{k} \delta_i^{3}
    \geq \frac{1}{2} - \delta_1^{3} - \delta_2^{3} - o_{k \rightarrow \infty}(1)
    = \frac{1}{2} - \left(1 + a^3\right)\delta_1^{3} - o_{k \rightarrow \infty}(1)
\] whence, by \eqref{eq:optimisation},
\begin{align}
    \sum_{i=1}^{k}\delta_i^{2}
    &\geq \left(1+a^{2}\right)\delta_1^{2} 
        + \Big(\frac{1}{2} - \left(1+a^{3}\right)\delta_1^{3}\Big)^{{2} / {3}}
        \nonumber
            - o_{k \rightarrow \infty}(1) \\
    &\geq \min_{x,y} \left( \left(1 + y^2 \right)x^2 +
    \left(\frac{1}{2} - (1+y^3)x^3 \right)^{2/3}\right) - o_{k \rightarrow \infty}(1)
    \label{eq:case2.2xy}
,\end{align} where the minimum is over the range $0.33 \leq x\leq 0.45$, $0.7 \leq y\leq 1$. One can check that the expression being minimised in \eqref{eq:case2.2xy} is increasing with $y$.
\ifdetails
In detail, consider the derivative with respect to $y$:
\[
2x^2y - 2x^3y^2
       \left(1 / 2 - \left(1+y^3\right)x^3\right)^{- 1 / 3} 
.\] 
We have $y \leq 1$ and $x < 0.5$, so
\begin{align*}
    &x^3\left(2y^3 + 1\right) \leq 1 / 2 \\
    \implies &x^3y^3 \leq 1/2 - \left(1 + y^3\right)x^3 \\
    \implies & xy \left( 1/2 - (1+y^3)x^3\right)^{- 1 / 3} \leq 1 \\
    \implies & 2x^2y - 2x^3y^2
        \left( 1/2 - \left(1+y^3\right)x^3\right)^{-1/3} \geq 0
\end{align*}
so the expression is increasing with $y$.
\fi
Hence
\begin{equation} \label{eq:case2.2x}
    \sum_{i=1}^{k}\delta_i^{2}
    \geq \min_x\left( 1.49x^{2} + 
        \Big(0.5 - 1.343x^{3}\Big)^{2 / 3} \right)
        - o_{k \rightarrow \infty}(1)
.\end{equation}
This new expression increases with $x$ (see Figure \ref{fig}).
\ifdetails
 It has derivative 
 \[
     2.98x - 2.686x^{2}\left(0.5 - 1.343x^{3}\right)^{- 1 / 3} 
 \] 
 which is at least
 \[
 2.98 \times 0.33 - 2.686\times 0.45^{2} \times \left(\frac{1}{2} - 1.343 \times 0.45^{3} \right)^{- 1/3} > 0.23
 .\] 
 So the expression in the right hand side of \eqref{eq:case2.2x} is increasing.
\fi
So, we can compute
\[
    \sum_{i=1}^{k}\delta_i^{2} \geq 1.49 \times 0.33^{2} + \left(\frac{1}{2} - 1.343 \times 0.33^{3}\right)^{2 / 3} > 0.7510 - o_{k \rightarrow \infty}(1)
.\] 

\case{Case 3: $0.45 \leq \delta_1 \leq 0.7455$} Here $\delta_1$ is quite large,
but $\delta_1^{3} < 1 / 2$, so $\delta_2$ will have to be quite large also. This
will allow us to use \eqref{eq:finalbound}. In detail,
Corollary \ref{cor:sumdeltacubes} gives \[
    \sum_{i=2}^{k}\delta_i^{3}
    \geq \frac{1}{2} - \delta_1^{3} - o_{k \rightarrow \infty}(1)
.\] If $k$ is large enough then the right hand side is positive.
So from \eqref{eq:optimisation} we have
\begin{align} \label{eq:case3}
    \sum_{i=1}^{k}\delta_i^{2}
    &\geq \delta_1^{2} + \left(\frac{1}{2} - \delta_1^{3}\right)^{2 / 3}
        - o_{k \rightarrow \infty}(1) \nonumber \\
    &\geq \min_x \left( x^{2} + \left(\frac{1}{2} - x^{3}\right)^{2 / 3} \right)
        - o_{k \rightarrow \infty}(1) 
,\end{align}
where the minimum is taken over the range $0.45 \leq x \leq 0.7455$.
This expression is  smallest when $x = 0.7455$ (see Figure \ref{fig}).
\ifdetails
 In detail, the derivative is \[
     2x - 2x^2 \left( \frac{1}{2} - x^{3}\right)^{-1 / 3}
 \]
 so the only turning point in the range under consideration is when
 $x = 4^{- 1 / 3} \approx 0.63$.
 So the expression in \eqref{eq:case3} will be minimised at one of the points
 $x = 0.45$, $x = 4^{-1 / 3}$, and $x = 0.7455$. A calculation confirms that
 the minimum is for the last of these.
\fi
  So we have \[
    \sum_{i=1}^{k}\delta_i^{3}
    \geq 0.7455^{2} + \left(\frac{1}{2} - 0.7455^{3}\right)^{2 / 3}
        - o_{k \rightarrow \infty}(1)
    > 0.7501 - o_{k \rightarrow \infty}(1)
.\] 

\begin{figure}
    \begin{tikzpicture}[scale=0.5]
        \begin{axis}[samples = 12,
                     smooth,
                     xlabel={$x$},
                     thick,
                     axis lines=center,
                     xmin=0.32,
                     xmax=0.46,
                     ymin=0.742,
                     ymax=0.826,
                     ]
                     \addplot[no marks,black,domain=0.32:0.47]
                         {1.49*x^2 + ((0.5 - 1.343*x^3)^(2))^(1/3)};
                     \addplot[dashed, no marks]
                         coordinates {(0.33,0) (0.33,1)};
                     \addplot[dashed, no marks]
                         coordinates {(0.45,0) (0.45,1)};
                     \addplot[no marks, red]
                         {0.75};
        \end{axis}
    \end{tikzpicture}
    \begin{tikzpicture}[scale=0.5]
        \begin{axis}[samples = 20,
                     smooth,
                     xlabel={$x$},
                     thick,
                     axis lines=center,
                     xmin=0.43,
                     xmax=0.77,
                     ymin=0.745,
                     ymax=0.797,
                     ]
                     \addplot[no marks,black,domain=0.4:0.8]
                         {x^2 + ((0.5 - x^3)^(2))^(1/3)};
                     \addplot[dashed, no marks]
                         coordinates {(0.45,0) (0.45,1)};
                     \addplot[dashed, no marks]
                         coordinates {(0.7455,0) (0.7455,1)};
                     \addplot[no marks, red]
                         {0.75};
        \end{axis}
    \end{tikzpicture}
    \begin{tikzpicture}[scale=0.5]
        \begin{axis}[samples=10,
                     smooth,
                     xlabel={$x$},
                     thick,
                     axis lines=center,
                     xmin=0.74,
                     xmax=0.815,
                     ymin = 0.745,
                     ymax=0.79
                     ]
                     \addplot[no marks,black,domain=0.74:0.8091,
                         samples at ={0.74, 0.744, ..., 0.8, 0.8, 
                            0.8001, ..., 0.8090, 0.80901}]
                            {4*x^4 - 3*x^2 + 1 + (( 0.5 + (2 * x^2 - 1)^3 - x^3)^(2))^(0.3333)};
                     \addplot[no marks,blue,domain=0.74:0.82]
                         {x^2 + ((0.5 - sqrt(2) * x^2 / 2)^2)^(1/3) };
                     \addplot[dashed, no marks] coordinates
                         {(0.7455,0) (0.7455,0.9)};
                     \addplot[dashed, no marks] coordinates
                         {(0.80901,0) (0.80901,0.9)};
                     \addplot[no marks, red] coordinates
                         {(0.6, 0.75) (0.9, 0.75)};
        \end{axis}
    \end{tikzpicture}

    \caption{ The function of $x$ which is minimised to produce a lower bound
        on $\sum_{i=1}^{k}\delta_i^{3}$ in different cases, along with the
        region on which $x$ is minimised in each case (dashed lines)
        and the constant $0.75$ (red).
        \emph{Left:} Case 2.2 given by \eqref{eq:case2.2x}.
        \emph{Centre:} Case 3 given by \eqref{eq:case3}.
        \emph{Right:} Cases 4.1 given by \eqref{eq:case4.1} (black) and
                      4.2 given by \eqref{eq:case4.2} (blue).
    }
    \label{fig}
\end{figure}
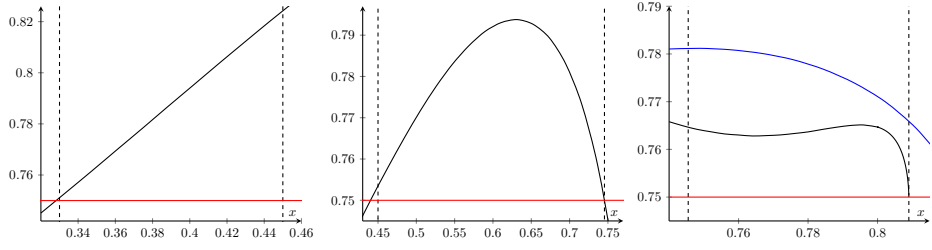

\case{Case 4: $0.7455 \leq \delta_1 \leq 0.809016$ }
If $\theta_1$ is close to $0$ or $\pi$ then Lemma \ref{lemma:positivecoeff}
will give us a large coefficient in the right half-plane. Otherwise, the
contribution of $r_1$ to Corollary \ref{cor:sumdeltacubes} is negligible.
In either case, we end up being able to use \eqref{eq:finalbound}.\footnote{
    The choice of boundary may seem odd here. The argument in this case gives
    $\alpha \leq 0.25 + o(1)$ exactly for
    $\delta_1 = \sqrt{ (3 + \sqrt{5} ) / 8 }
        \approx 0.809017$, so to get below that bound with this argument
        we consider a region slightly to the left of this critical point.}

Assume $p > 3$ and let $t$ be such that $2r_1 = \pm r_t$. Note that $t \neq 1$,
as otherwise either $2r_1 = r_1$ or $3r_1 = 0$, which both imply $r_1 = 0$ since
$p > 3$. If we write $\Delta(\delta, \theta) = 2\delta^{2}\cos^{2}\theta - 1$ for any
$\delta, \theta$, then Lemma \ref{lemma:positivecoeff} says that \[
    \Re \wh{1_A}(r_t) \geq \Delta(\delta_1, \theta_1)\alpha
.\] We also know from \eqref{eq:weakyudin} that
$\delta_t \geq 2\delta_1^{2} - 1$.

\case{Case 4.1: $\Delta(\delta_1, \theta_1) > 0$}
In this case, $\Re \wh{1_A}(r_t) > 0$. From Proposition \ref{prop:sumofcubes}
and the triangle inequality we have \[
    \delta_1^{3}\abs{\cos\theta_1} + \sum_{i \neq 1, t} \delta_i^{3}
    \geq \frac{1}{2} + \frac{\delta_t^{2}}{\alpha}\Re\wh{1_A}(r_t)
    \geq \frac{1}{2} + \left(2\delta_1^2-1\right)^{2}\Delta(\delta_1, \theta_1)
.\] 
By replacing $\theta_1$ with $\pi - \theta_1$ if necessary, we can assume
$\theta_1 \in [\pi / 2, 3 \pi / 2].$ Then
\begin{align} \label{eq:case4.1theta}
    \sum_{i \neq 1, t} \delta_i^{3}
    &\geq \frac{1}{2} + \left(2\delta_1^2-1\right)^2\Delta(\delta_1, \theta_1)
        + \delta_1^{3}\cos\theta_1 \nonumber \\
    &\geq \min_{t} \left( \frac{1}{2} +
    \left(2\delta_1^2-1\right)^2\Delta(\delta_1, t) + \delta_1^{3}\cos t \right)
,\end{align}
where the minimum is taken over the range $\pi / 2 \leq t \leq 3 \pi / 2$. 
It can be checked that this minimum is attained when $t = \pi$.
\ifdetails
 In detail, if we write $u = \cos t$, the expression in $t$ is a
 quadratic expression in $u$: \[
 2\delta_1^{2}\left(2\delta_1^{2} - 1\right)^{2}u^{2} + \delta_1^{3}u
    + \frac{1}{2} - \left(2\delta_1^{2} - 1\right)^{2}
 .\] This has a minimum at \[
 u = \frac{- \delta_1 }{4\left(2\delta_1^{2} - 1\right)^{2}}
 \leq \frac{-0.7455}{4\left(2 \times 0.809016^{2} - 1\right)^{2}} < - 1
 .\] 
 Hence the expression in \eqref{eq:case4.1theta} is minimised when
 $t = \pi$, so that $\cos t = -1$.
\fi
So \[
    \sum_{i \neq 1, t} \delta_i^{3}
    \geq \frac{1}{2} + \left(2\delta_1^{2} - 1\right)^{3} - \delta_1^{3}
.\] 
Then by Lemma \ref{lemma:taillemma}, since we've fixed $\alpha \geq 0.24$,
this becomes
\begin{equation}\label{eq:case4.1notheta}
    \sum_{2\leq i\leq k, i \neq t} \delta_i^{3} \geq \frac{1}{2} + \left(2\delta_1^{2} - 1\right)^3 - \delta_1^{3} - o_{k \rightarrow \infty}(1)
.\end{equation}
We can lower bound 
$\frac{1}{2} + \left( 2\delta_1^{2} - 1\right)^{3} - \delta_1^{3} > 0.000001$
here.
\ifdetails
To see this, first note that it is lower bounded by the minimum of
\begin{equation} \label{eq:case4.1xnotheta}
     \frac{1}{2} + \left(2x^{2} - 1\right)^{3} - x^{3}
 \end{equation} 
where $x$ is taken in the range $0.7455 \leq x \leq 0.809016$. This has derivative
\begin{align*}
    12x(2x^{2} - 1)^{2} - 3x^{2}
    &\leq 12\times 0.809016 \times (2 \times 0.809016^{2} - 1)^{2} - 3 \times 0.7455^{2} \\
    &< -0.7402 
.\end{align*}
So \eqref{eq:case4.1xnotheta} decreases with $x$, and we just need to check
its value at $x = 0.809016$.

\fi Therefore, by taking $k$ large enough we can
ensure that the right hand side of \eqref{eq:case4.1notheta} is positive.
It follows from \eqref{eq:optimisation} that
\begin{align} \label{eq:case4.1}
    \sum_{i=1}^{k}\delta_i^{2}
    &\geq \delta_1^2 + \left(2\delta_1^2 -1 \right)^2
        + \Big(\frac{1}{2} + \left(2\delta_1^{2}-1\right)^3
        - \delta_1^{3}\Big)^{2 / 3} - o_{k \rightarrow \infty}(1) \nonumber \\
    &\geq \min_x \left( x^2 + \left(2x^2 -1 \right)^2
        + \Big(\frac{1}{2} + \left(2x^{2}-1\right)^3
        - x^{3}\Big)^{2 / 3} \right) - o_{k \rightarrow \infty}(1) 
,\end{align} 
where the minimum is taken in the range $0.7455 \leq x \leq 0.809016$.
Now, it can be verified\footnote{
Intuitively, this sum will be smallest when all of the mass is concentrated
in $\delta_1$ and $\delta_2$, i.e\ when $\delta_1^3 - (2\delta_1^{2} - 1)^3$ is close to
$1/2$, which is when $\delta_1$ is close to $\sqrt{ ( 3 + \sqrt{5} ) / 8 } \approx 0.809017$.} that this attains its minimum when $x = 0.809016$
(see Figure \ref{fig}), so we can calculate \[
    \sum_{i=1}^{k}\delta_i^{2} > 0.75001 - o_{k \rightarrow \infty}(1)
.\] 
\ifdetails
 It is difficult to analyse \eqref{eq:case4.1} due to the singularity
 at $x = \left(\left(3 + \sqrt{5}\right)/8\right)^{1/2} \approx 0.809017$.
 An arbitrary-precision computer algebra system
 (the author used Mathematica 12.0.0.0) is the easiest way to check.
 Nevertheless, it is possible to produce a finite witness to the fact by
 splitting the interval $[0.7455, 0.809016]$ into successive closed intervals,
 and verifying the bound on each.
 
 Note that $(2t^{2} - 1)^{3} - t^{3}$ 
 is decreasing for $0.7455 \leq t \leq 0.809016$. So we have the lower bound
  \[ x^2 + \left(2x^2-1\right)^2 +
        \left(\frac{1}{2} +
            \left(2b^2-1\right)^3 - b^3
        \right)^{2 / 3}
  \] provided $x \leq b \leq 0.809016$. This in turn increases with $x$, 
  so we obtain the lower bound
  \[ 
   a^2 + \left(2a^2-1\right)^2 +
     \left(\frac{1}{2} + \left(2b^2-1\right)^3 - b^3 \right)^{2 / 3}
 \] on the interval $[a,b]$ for $0.7455 \leq a \leq b \leq 0.809016$.
 It therefore  suffices to find successive intervals $[a,b]$ for which this
 bound is at least 0.75001. One choice is to take the boundary points to be: 
 0.745500, 0.751659, 0.757252, 0.762475, 0.767466, 0.772325, 0.777125,
 0.781914, 0.786710, 0.791485, 0.796142, 0.800481, 0.804182, 0.806873,
 0.808367, 0.808906, 0.809008, 0.809016.
 \fi

\case{Case 4.2: $\Delta(\delta_1, \theta_1) \leq 0$} We shall apply Corollary \ref{cor:sumdeltacubes}, which says
\[
    \sum_{i=2}^{k}\delta_i^{3} \geq \frac{1}{2} - \delta_1^{3}\abs{\cos\theta_1}
    - o_{k \rightarrow \infty}(1)
.\] 
From the assumption that $\Delta(\delta_1, \theta_1) \leq 0$ we know that $\delta_1 \left| \cos\theta_1 \right| \leq \sqrt{2} / 2$. So
\[
    \sum_{i=2}^{k}\delta_i^{3} \geq \frac{1}{2} - \frac{\sqrt{2}}{2}\delta_1^{2} - o_{k \rightarrow \infty}(1)
.\] 
Now, $1 - \delta_1^{2}\sqrt{2} \geq 1 - 0.809016 ^{2} \times \sqrt{2} > 0$ here.
So after taking $k$ large enough the right hand side
above is positive. Then applying \eqref{eq:optimisation} gives 
\begin{align}\label{eq:case4.2}
    \sum_{i=1}^{k}\delta_i^{2}
    &\geq \delta_1^{2} + \left(
        \frac{1}{2} - \frac{\sqrt{2}}{2}\delta_1^{2}
    \right)^{2 / 3} - o_{k \rightarrow \infty}(1) \nonumber \\
    &\geq \min_x \left( x^{2} + \left(
        \frac{1}{2} - \frac{\sqrt{2}}{2}x^{2}
        \right)^{2 / 3} \right)- o_{k \rightarrow \infty}(1),
\end{align}
where the minimum is taken over the range $0.7455 \leq x \leq 0.809016$.
This minimum is attained when $x = 0.809016$ (see Figure \ref{fig}).
\ifdetails
 In detail, write $z = x^{2}$ so we are looking to minimise 
 \[
     z + \left(\frac{1}{2} - \frac{\sqrt{2}}{2}z\right)^{2 / 3}
 \] in the interval $0.7455^{2} \leq z \leq 0.809016^{2}$.
 This has derivative
 \[
     1 - \frac{\sqrt{2}}{3}\left(
          \frac{1}{2} - \frac{\sqrt{2}}{2}z
     \right)^{- 1 / 3}
 \]
 which is zero when
 \[
     z = z_0 =
     \frac{2}{\sqrt{2}} \times \left(\frac{1}{2}-\frac{2\sqrt{2}}{27}\right)
     \approx 0.5590
 ,\] 
 so there is a single turning point in the right hand side of \eqref{eq:case4.2}
 when $x = \sqrt{z_0} \approx 0.7476$. So we only need to check the value at
 this point along with the two boundary points.
\fi
So we can calculate
\[
    \sum_{i=1}^{k} \delta_i^{2} > 0.7659 - o_{k \rightarrow \infty}(1)
.\] 
\case{Case 5: $\delta_1 \geq 0.809016$} Here, Lemma \ref{lemma:positivecoeff}
will allow us to force $\delta_1^{2} + \delta_2^{2} > 0.750001$ and use
Proposition \ref{prop:selfinversebound}. Note that we really do
need the improvement over \eqref{eq:weakyudin}, as otherwise we get
$\delta_1^{2} + \delta_2^{2} \geq 0.75$ when
$\delta_1 = \left( \left(3 + \sqrt{5} \right) / 8 \right)^{1 / 2}$. First, take
$p$ large enough that the error in
Lemma \ref{lemma:positivecoeff} is less than $0.000001$, given
$\alpha_0 \geq 0.24$.

Then by Lemma \ref{lemma:positivecoeff} we know that
$\delta_2 \geq 2\delta_1^{2} - 1 + \eps - 0.000001$ where
\[
    \eps = \frac{2^{9}}
            {3^{4} \times 5^{5}} \times 0.24^{4} > 0.0000061
,\] 
which implies \[
    \delta_1^{2} + \delta_2^{2}
    \geq \delta_1^{2} + \left(2\delta_1^{2} -0.999994\right)^{2} 
    \geq \min_x \left( x^2 + \left( 2x^2 - 0.999994 \right)^2 \right)
,\] where the minimum is taken over the range $0.809016 \leq x \leq 1$. This
 is increasing since $x \geq 0.809016$ implies $2x^{2} > 0.999994$,
so \[
    \delta_1^{2} + \delta_2^{2} \geq 0.809016^{2} +
        \left(2 \times 0.809016^{2} - 0.999994\right)^{2} > 0.7500001
.\]

Now applying Proposition \ref{prop:selfinversebound} with $k = 2$ gives
\[
    \alpha \leq \frac{1}{1 + 4\left(\delta_1^{2} + \delta_2^{2}\right)}
        + O\left(1 / \sqrt{p}\right)
    \leq 0.249999975 + o(1)
.\] 
\end{proof}
\section{Fields of characteristic 2} \label{sec:char2}
Now suppose that $\F$ is a field of order $q = 2^{n}$, and let $A$ be a subset
of $\F^*$. Define the \emph{trace $\Tr : \F \to \F_2$} by
\[
    \Tr (x) \coloneqq \sum_{i=0}^{n-1}x^{2^{i}}
.\] 
Note that $\Tr(x) + \Tr(y) = \Tr(x+y)$.
We shall make use of the following bound on Kloosterman sums over fields of
characteristic $2$ (see \cite{conrad2002}).
\begin{lemma}\label{lemma:kloosterman2}
    If $a \in \F^{*}$ then
    \[
        \Bigl| \sum_{x \in \F^{*}}(-1)^{\Tr\left(x + ax^{-1} \right)} \Bigr|
        \leq 2\sqrt{q}
    .\] 
\end{lemma}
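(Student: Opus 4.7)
The plan is to translate the character sum into a point count on an algebraic curve over $\F$ and then apply the Hasse--Weil bound. The key identity in characteristic $2$ is the Artin--Schreier fact that for any $t \in \F$, the equation $y^2 + y = t$ has exactly $2$ solutions in $\F$ when $\Tr(t) = 0$ and $0$ solutions when $\Tr(t) = 1$; equivalently,
\[
    1 + (-1)^{\Tr(t)} = \#\{y \in \F : y^2 + y = t\}.
\]
Applying this with $t = x + ax^{-1}$ and summing over $x \in \F^{*}$ gives
\[
    (q-1) + \sum_{x \in \F^{*}}(-1)^{\Tr(x + ax^{-1})}
    = \#\bigl\{(x,y) \in \F^{*} \times \F : y^2 + y = x + ax^{-1}\bigr\}.
\]
So the task reduces to counting affine $\F$-points on the curve $C$ cut out by $xy^2 + xy = x^2 + a$ (with $x \neq 0$) inside $\mathbb{A}^2$.

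Next I would identify $C$ as (the affine part of) an elliptic curve. Projection to the $x$-coordinate exhibits the smooth projective model of $C$ as a double cover of $\mathbb{P}^1$ branched along the Artin--Schreier divisor coming from $x + ax^{-1}$, which has a double pole at $0$ and at $\infty$. A Riemann--Hurwitz / conductor calculation (using $a \neq 0$ so that no cancellation occurs) gives genus $g = 1$. I would verify that the singular plane model has a unique geometric point above each of $x = 0$ and $x = \infty$, so that the smooth projective model $\widetilde{C}$ differs from the affine set counted above by a bounded number of points that can be enumerated exactly.

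With $\widetilde{C}$ a smooth projective genus $1$ curve over $\F_q$, the Hasse--Weil bound gives $\#\widetilde{C}(\F_q) = q + 1 + \theta$ with $\abs{\theta} \leq 2\sqrt{q}$. Subtracting the contributions from the points at infinity and above $x = 0$ from the left-hand side of the displayed identity yields the claimed bound. The main obstacle is the bookkeeping in this last step: one must check that the singular/infinite points contribute exactly the right constants so that the error telescopes down to precisely $2\sqrt{q}$, rather than $2\sqrt{q} + O(1)$. An alternative that sidesteps this bookkeeping is to work directly with the $L$-function of the exponential sum $x \mapsto \Tr(x + ax^{-1})$ on $\mathbb{G}_m$, whose degree is $2$ and whose inverse roots have absolute value $\sqrt{q}$ by Weil's theorem; either route gives the bound, and I would defer to the treatment in \cite{conrad2002} for the technical details.
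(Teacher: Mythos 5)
The paper does not prove this lemma; it is invoked as a black box with a citation to \cite{conrad2002}, which in turn follows Weil. So your sketch is not in competition with an argument in the paper --- it is an outline of the content of the cited reference, via the standard reduction to a point count on an Artin--Schreier curve followed by Hasse--Weil, and the outline is sound. Two remarks on the details you left open.

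First, a small slip: $x + ax^{-1}$ has a \emph{simple} pole at $x = 0$ and a simple pole at $x = \infty$, not double poles. The genus-$1$ conclusion still holds: for the degree-$2$ Artin--Schreier cover $y^2 + y = f(x)$ of $\mathbb{P}^1$ in characteristic $2$ with $f$ having exactly two poles, each simple, the different has degree $(p-1)(d+1) = 2$ at each of the two ramified places, so Riemann--Hurwitz gives $2g - 2 = 2(-2) + 2 + 2 = 0$, i.e.\ $g = 1$. You should also note that $a \neq 0$ forces $f$ not to be of the form $g^2 + g$ for any $g \in \overline{\F}(x)$ (otherwise every pole order of $f$ would be even), so the cover is geometrically irreducible and Hasse--Weil is applicable.

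Second, the bookkeeping you flagged as the main obstacle actually closes exactly, with no stray $O(1)$. Each simple pole of $f$ is totally ramified in the cover and the unique place above it has residue field $\F_q$, so the smooth projective model $\widetilde{C}$ has exactly one $\F_q$-rational point over $x = 0$ and one over $x = \infty$. Adding these to your displayed identity gives $\#\widetilde{C}(\F_q) = (q-1) + S + 2 = q + 1 + S$, where $S$ denotes the Kloosterman sum, and Hasse--Weil for a geometrically irreducible genus-$1$ curve yields $\abs{S} = \abs{\#\widetilde{C}(\F_q) - (q+1)} \leq 2\sqrt{q}$, which is precisely the claimed bound.
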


\begin{proof}[Proof of Proposition \ref{prop:char2}]
    Let $\gamma \from \F \to \C$ be the additive character on $\F$ given by
\[
    \gamma(x) = (-1)^{\Tr(x)}
.\] 
Define
$
X \coloneqq \F \setminus \ker \gamma
$ 
and, noting that $0 \not \in X$ since $0 \in \ker \gamma$,
$
A \coloneqq X \cap X^{-1}
.$ Then $X$ is sum-free, and $A$ is both sum-free and closed under inverses.

Note $1_X = \frac{1}{2}(1 - \gamma)$.
So, with the convention that $0^{-1} = 0$, we have
\begin{align*}
    \alpha 
    = \underset{x}{\E}\left[ 1_X(x) 1_{X^{-1}}(x) \right]  
    &= \underset{x}{\E}\left[ 1_X(x) 1_X(x^{-1}) \right] \\
    &= \frac{1}{4}\underset{x}{\E}\left[ (1 - \gamma(x))(1 - \gamma(x^{-1})) \right] \\
    &= \frac{1}{4}
       +\frac{1}{4}\underset{x}{\E}\left[\gamma(x) \gamma(x^{-1})\right] \\
.\end{align*}
Since $\Tr(x) + \Tr(x^{-1}) = \Tr(x+x^{-1})$, we have $\gamma(x)\gamma(x^{-1}) = \gamma(x + x^{-1})$. Then \[
    \bigl| \underset{x}{\E}\left[ \gamma(x) + \gamma(x^{-1})) \right] \bigr| 
    = \bigl| \underset{x}{\E}\left[ \gamma(x + x^{-1}) \right] \bigr|
    \leq \frac{2\sqrt{q}}{q}
    = o(1)
\] by Lemma \ref{lemma:kloosterman2}, which gives our result.
\end{proof}

\section{Final remarks} \label{sec:remarks}

\subsection{} Write $\sigma(\F)$ for the density $\abs{A}/\abs{\F}$ of the
largest sum-free subset $A$ of $\F$. This quantity was studied in the more
general context of finite Abelian groups by Diananda and Yap in \cite{dy1969}.
Recall from Section \ref{sec:introduction} that we define $\mu(\F)$ to be the
density of the largest subset of $\F$ which is both sum-free and closed under
inverses.

When $\F$ has characteristic $2$ it can be seen that $\sigma(\F) = 1/2$, as the
set $X$ in the proof of Proposition \ref{prop:char2} demonstrates. Moreover,
Proposition \ref{prop:char2} itself shows $\mu(\F) \geq 1/4 - o(1)$.

When $\F$ has prime order $p>2$, the interval
$I = \{ x \in \F : p/3 < x < 2 p/3 \}$ has density $1/3 + o(1)$, and this is
the best possible by the Cauchy-Davenport inequality. As described
in \cite[p.~8]{bhs2019}, the set $I \cap I^{-1}$ is then sum-free and closed
under inverses, and has density $1/9 - o(1)$. So $\mu(\F) \geq 1/9 - o(1)$.

It is reasonable to suspect that the events `$A$ is sum-free' and
`$A^{-1}$ is sum-free' are independent. So, we conjecture that the lower bounds
above are in fact tight:
\begin{conjecture}
    Let $\F$ be a finite field. Then $\mu(\F) = \sigma(\F)^{2} + o(1)$
    as $\abs{\F} \rightarrow \infty$.
\end{conjecture}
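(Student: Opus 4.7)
The plan is to establish $\mu(\F) \geq \sigma(\F)^{2} - o(1)$ and $\mu(\F) \leq \sigma(\F)^{2} + o(1)$ separately; the lower bound is a direct consequence of the Kloosterman estimates already used in the paper, while the upper bound is the main challenge.

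For the lower bound, fix a sum-free set $S \subseteq \F^{*}$ with $|S|/|\F| = \sigma(\F) + o(1)$, taking $S$ to be an interval $\{x : p/3 < x < 2p/3\}$ in the prime case and $X = \F \setminus \ker\Tr$ (as in Proposition \ref{prop:char2}) in characteristic $2$. Set $A \coloneqq S \cap S^{-1}$; this is sum-free, as a subset of $S$, and satisfies $A = A^{-1}$ by construction. Expanding both indicators in Fourier series, with the convention $0^{-1} = 0$,
\[
    \frac{\abs{A}}{\abs{\F}}
    = \underset{x}{\E}\bigl[ 1_S(x) 1_S(x^{-1}) \bigr]
    = \sum_{r, s} \wh{1_S}(r) \wh{1_S}(s)
      \underset{x}{\E}\bigl[ e_r(x) e_s(x^{-1}) \bigr].
\]
The $(0,0)$ term contributes $\sigma(\F)^{2}$, the terms with exactly one of $r, s$ equal to zero vanish by orthogonality, and Lemma \ref{lemma:kloosterman} or Lemma \ref{lemma:kloosterman2} bounds each remaining inner average by $O(\abs{\F}^{-1/2})$. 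Both choices of $S$ above have $\sum_{r} \bigl|\wh{1_S}(r)\bigr|$ at most polylogarithmic in $\abs{\F}$ (the interval by an explicit Dirichlet-kernel estimate, $X$ because $1_X$ has only two non-zero Fourier coefficients), so the total error sums to $o(1)$ and yields $\abs{A}/\abs{\F} = \sigma(\F)^{2} + o(1)$.

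For the upper bound, my plan is a structural reduction: show that any sum-free set $A$ with $A = A^{-1}$ and $\abs{A}/\abs{\F} \geq \sigma(\F)^{2} + \eps$ satisfies $\bigl|A \setminus S\bigr| = o(\abs{\F})$ for some sum-free $S$ of density $\sigma(\F) + o(1)$ which, crucially, has a small $\ell^{1}$-Fourier norm. The identity $A = A^{-1}$ then also gives $\bigl|A \setminus S^{-1}\bigr| = o(\abs{\F})$, so $A$ is contained in $S \cap S^{-1}$ up to $o(\abs{\F})$ elements, and the Kloosterman calculation from the lower bound applied to this $S$ yields $\abs{A}/\abs{\F} \leq \sigma(\F)^{2} + o(1)$, contradicting the assumption.

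The main obstacle is the structural reduction. In the prime case the target density $1/9 + \eps$ is far from the sum-free maximum $1/3$, and the stability results for sum-free subsets of $\F_p$ such as Lev's \cite{lev2006} only describe the regime close to $1/3$. A natural line of attack is the hypergraph container method applied to the $3$-uniform hypergraph on $\F$ whose edges are additive triples: this would represent every sum-free subset as an approximate subset of one of relatively few containers, each itself essentially sum-free. However, generic containers are not a priori Fourier-smooth, and even when they are, one must separately exploit $A = A^{-1}$ to cut down the list of viable containers. Since $x \mapsto x^{-1}$ does not commute with affine translations or with Bohr-set arguments, the standard framework of structural additive combinatorics does not apply directly, and devising an inversion-equivariant version appears to be the crux of the conjecture.
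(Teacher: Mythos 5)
This statement is posed in the paper as a conjecture, not a theorem; the paper does not prove it, offering only the heuristic that the events ``$A$ is sum-free'' and ``$A^{-1}$ is sum-free'' should behave roughly independently, together with the explicit lower-bound constructions $I\cap I^{-1}$ in the prime case and $X\cap X^{-1}$ in Proposition \ref{prop:char2}. Your lower bound $\mu(\F)\geq\sigma(\F)^{2}-o(1)$ is sound and essentially reproduces those constructions in Fourier language: the Kloosterman estimate kills the cross-terms whenever the near-maximal sum-free $S$ has polylogarithmic Fourier $\ell^{1}$-norm, which holds for intervals and for complements of character kernels, and (a point you should still check, though it is only a technicality) also for the Diananda--Yap extremal sum-free sets in a general $\F_{p^{n}}$, since those are preimages of intervals under a homomorphism onto $\Z/p\Z$.

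The genuine gap is the upper bound, which you candidly admit you cannot prove. The structural reduction you propose---that any sum-free $A=A^{-1}$ of density at least $\sigma(\F)^{2}+\eps$ is almost contained in a single sum-free $S$ of density $\sigma(\F)+o(1)$ with small Fourier $\ell^{1}$-norm---is not a known lemma, is not justified by anything you cite, and as stated appears to be at least as hard as the conjecture itself, so invoking it does not constitute progress. You are right that container outputs are not a priori Fourier-smooth, and that $x\mapsto x^{-1}$ fails to interact with the translation-equivariant machinery (Bohr sets, density increments, Freiman-type stability near $1/3$) that ordinarily powers such reductions; but correctly identifying the obstruction is not the same as overcoming it. For perspective on how far the actual methods of the paper reach, Theorem \ref{thm:bound} proves only $\mu(\F_p)<0.25-c+o(1)$ using Bombieri's almost-orthogonality inequality together with Weil's Kloosterman bound, which is still a long way from the conjectured $1/9+o(1)$. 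So the proposal establishes the easy direction and diagnoses, but does not close, the hard one.
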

\subsection{} For a set $A \subseteq \F^{*}$ we can use the quantity
\[
    I(A) \coloneqq \frac{\abs{A \cap A^{-1}}}{\abs{A}}
\] 
to measure `how much' $A$ is closed under inverses.
So we have studied sum-free sets $A$ with $I(A) = 1$.
When $\F$ has prime order $p$ and $A$ is sum-free with $I(A)$ large, we might
still expect to do better than the bound of $\abs{A} < (p+1) / 3$ given by the
Cauchy-Davenport inequality. Indeed, since $A \cap A^{-1}$ is itself sum-free
and closed under inverses we have \[
    \alpha = \abs{A}/p= \frac{\abs{A \cap A^{-1}}}{I(A) \times p} 
    \leq \frac{\mu(\F)}{I(A)}
.\] 
So when $I(A) \geq  0.75$ we can use Theorem \ref{thm:bound} to deduce
\[
    \alpha \leq \frac{\mu(\F)}{0.75}
    \leq \frac{\left(0.25 - c\right) + o(1)}{0.75}
    \leq  \left(1 - 4c\right) / 3 + o(1)
.\] 

\section*{Acknowledgements}

This work was funded by a London Mathematical Society Undergraduate Research
Bursary and the Mathematical Institute at the University of Oxford. I am
immensely grateful to Tom Sanders for suggesting this topic of research and for
his unwaveringly enthusiastic mentorship throughout the summer. His mathematical
advice was invaluable, as were his many helpful comments on the drafts of this
paper.

\end{document}